\numberwithin{equation}{section}
\newcommand{\rank}{\operatorname{rank}}
\newcommand{\image}{\operatorname{im}}
\newcommand{\dem}{\begin{proof}}
\newcommand{\cqd}{\end{proof}}
\def\to{\mathop{\rightarrow}}
\newtheorem{theorem}{Theorem}[section]
\newtheorem{proposition}[theorem]{Proposition}
\newtheorem{claim}{Claim}
\newtheorem{lemma}[theorem]{Lemma}
\newtheorem{definition}[theorem]{Definition}
\newtheorem{obs}[theorem]{Remark}
\newtheorem{example}[theorem]{Example}
\newtheorem{question}[theorem]{Question}
\begin{document}

\keywords{}

\subjclass[2000]{}

\date{\today}

\title{Tori with hyperbolic dynamics in 3-manifolds}

\begin{abstract}
Let $M$ be a closed orientable irreducible
3-dimensional manifold, and let $f:M\to M$ be a diffeomorphism. We call an embedded 2-torus ${\mathbb T}$ {\em Anosov torus} if ${\mathbb T}$ is $f$-invariant and $f_\#|_\mathbb{T}:\pi_1(\mathbb{T})\rightarrow \pi_1(\mathbb{T})$ is
hyperbolic. We prove that only few irreducible 3-manifolds admit Anosov tori: (1) the 3-torus ${\mathbb T}^3$, (2) the mapping torus of $-id$, and (3) the mapping torus of hyperbolic automorphisms of ${\mathbb T}^2$ .\par
This has consequences for instance in the context of partially hyperbolic dynamics of 3-manifolds: if there is an invariant foliation ${\mathcal F}^{cu}$ tangent to the center-unstable bundle $E^c\oplus E^u$, then $\mathcal{F}^{cu}$ has no compact leaves \cite{rru_nocoherent}. This has led to the first example of a non-dynamically coherent partially hyperbolic diffeomorphism with one-dimensional center bundle. \cite{rru_nocoherent}.
\end{abstract}
\thanks{}

\author{F. Rodriguez Hertz}
\address{IMERL-Facultad de Ingenier\'\i a\\ Universidad de la
Rep\'ublica\\ CC 30 Montevideo, Uruguay.}
\email{frhertz@fing.edu.uy}\urladdr{http://www.fing.edu.uy/$\sim$frhertz}

\author{M. A. Rodriguez Hertz}
\address{IMERL-Facultad de Ingenier\'\i a\\ Universidad de la
Rep\'ublica\\ CC 30 Montevideo, Uruguay.}
\email{jana@fing.edu.uy}\urladdr{http://www.fing.edu.uy/$\sim$jana}

\author{R. Ures}
\address{IMERL-Facultad de Ingenier\'\i a\\ Universidad de la
Rep\'ublica\\ CC 30 Montevideo, Uruguay.} \email{ures@fing.edu.uy}
\urladdr{http://www.fing.edu.uy/$\sim$ures}

\maketitle

\section{Introduction}
In this article we address the issue of whether an embedded torus
in a 3-manifold is left invariant by a diffeomorphism
inducing a hyperbolic automorphism in its first fundamental group.
Our motivation in studying these objects come from many interesting problems in partially hyperbolic dynamics, which will be discussed shortly afterwards.  However, let us remark that this result is not entirely devoid of interest in the topological classification of 3-manifolds.
\par
Indeed, there is a unique minimal way, up to isotopy, of cutting an irreducible oriented 3-manifold along embedded disjoint incompressible tori, into pieces that are either Seifert or atoroidal manifolds. This decomposition is called {\em JSJ decomposition}, and is unique up to isotopy.
The way in which these Seifert and atoroidal manifold are glued together along the incompressible tori determines the type of the 3-manifold. In this way, the same family of basic (Seifert or atoroidal) components can produce non-diffeomorphic 3-manifolds if the gluing diffeomorphisms are different.  Here we give some step toward the classification of 3-manifolds. As an example of how our result maybe used in this direction we have that for instance, if $M$ and $M'$ are build up by the same two components, there is only one cutting torus in the JSJ-decomposition, the gluing diffeomorphism in $M$ is the identity, and in $M'$ is hyperbolic, then $M$ and $M'$ are not diffeomorphic. Let us note that if two 3-manifolds are diffeomorphic, the cutting tori in the JSJ-decomposition can be taken to be invariant under this diffeomorphism (see Remark \ref{remark f deja invariante JSJ}). Our main result (Theorem \ref{theorem principal}) also shows that, except in some very particular cases, the diffeomorphism will not be isotopic to hyperbolic on any cutting torus. \par
We shall say that a 2-torus ${\mathbb T}$ embedded in a 3-manifold $M$ is an {\em Anosov torus} if there exists a diffeomorphism $f$ over $M$ such that the induced action of $f$ over the fundamental group of ${\mathbb T}$ is hyperbolic. Our main result is the following.\par
\begin{theorem}\label{theorem principal}
A closed oriented irreducible 3-manifold admits an Anosov torus if and only if it is one of the following:
\begin{enumerate}
\item the 3-torus
\item the mapping torus of $-id$
\item the mapping torus of a hyperbolic automorphism
\end{enumerate}
\end{theorem}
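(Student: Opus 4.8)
The plan has two halves: an explicit construction for the three listed manifolds, and, for the converse, a case analysis driven by the geometric decomposition of $M$. For the ``if'' direction note that $\mathbb{T}^3$, the mapping torus of $-id$, and the mapping torus of a hyperbolic $A\in SL(2,\mathbb{Z})$ are all of the form $M_A=(\mathbb{T}^2\times\mathbb{R})/\!\sim$ with $(x,t)\sim(Ax,t-1)$, taking $A=id$, $A=-id$, and $A$ hyperbolic respectively. If $B\in SL(2,\mathbb{Z})$ commutes with $A$, then $(x,t)\mapsto(Bx,t)$ descends to a diffeomorphism $\hat B$ of $M_A$ fixing each torus fibre $\mathbb{T}_0$ and inducing $B$ on $\pi_1(\mathbb{T}_0)\cong\mathbb{Z}^2$. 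Choosing $B$ hyperbolic (any such $B$ when $A=\pm id$; $B=A$ when $A$ is hyperbolic) makes $\mathbb{T}_0$ an Anosov torus.

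For the ``only if'' direction, let $\mathbb{T}\subset M$ be an Anosov torus with diffeomorphism $f$, and $A=f_\#|_{H_1(\mathbb{T})}$, hyperbolic; replacing $f$ by $f^2$ we take $A\in SL(2,\mathbb{Z})$, and $\mathbb{T}$ is two-sided as $M$ is orientable. I first rule out $\mathbb{T}$ compressible: in an irreducible $3$-manifold a compressible torus bounds a solid torus $V$, and since $f$ permutes the two complementary solid regions it either preserves $V$ or interchanges it with a complementary solid torus; in both cases $f^2(V)=V$, so $f^2_\#$ preserves the meridian line of $V$ in $H_1(\mathbb{T})$ and $A^2$ has an eigenvalue $\pm1$ — impossible. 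Hence $\mathbb{T}$ is incompressible, so $\pi_1$-injective and essential; in particular $M$ is Haken, and by Thurston's geometrization of Haken manifolds together with the Torus Theorem, $M$ is neither hyperbolic nor spherical (those have no essential torus): it is Seifert fibred, a $\mathrm{Sol}$ manifold, or has a nontrivial JSJ decomposition.

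Passing to a power $f^k$ (with $A^k$ still hyperbolic), I may assume $f^k$ fixes every JSJ torus and JSJ piece and preserves the Seifert fibration of each Seifert piece, and I put $\mathbb{T}$ in normal form relative to the JSJ tori: parallel to a JSJ torus, or vertical, or horizontal in a Seifert piece. Three rigidity facts eliminate almost everything. (a) If $\mathbb{T}$ is vertical in a Seifert piece $N$ with $f^k(N)=N$, the regular fibre of $N$ spans an $f^k_\#$-invariant line of $H_1(\mathbb{T})$, so $A^k$ has eigenvalue $\pm1$ — impossible; the Seifert fibration of $N$ is unique up to isotopy except for a short list, and the only manifolds on it that are irreducible with an essential torus are $\mathbb{T}^3$ and the mapping torus of $-id$, which are on our list (the Nil, $\widetilde{SL}_2$ and $\mathbb{H}^2\times\mathbb{R}$ cases are the same, the fibre class generating a characteristic subgroup of $\pi_1$). (b) If $\mathbb{T}$ is parallel to a JSJ torus $T$ bounding a simple (hyperbolic) piece $P$, then $f^k|_P$ is homotopic — by Mostow rigidity, equivalently Johannson's finiteness theorem — to a finite-order homeomorphism, hence acts with finite order on $H_1(T)=H_1(\mathbb{T})$ — impossible. (c) If $T$ lies between two Seifert pieces, their regular fibre slopes on $T$ are distinct (this is what makes $T$ a JSJ torus) and both $f^k_\#$-invariant, forcing $A^k=\pm id$ — impossible. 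What survives is that $M$ is a torus bundle over $S^1$ with $\mathbb{T}$ isotopic to a fibre (the horizontal Seifert case has periodic monodromy; the $\mathrm{Sol}$ case is of this type). Writing $\pi_1(M)=\mathbb{Z}^2\rtimes_\sigma\mathbb{Z}$ with $\pi_1(\mathbb{T})$ the normal $\mathbb{Z}^2$, invariance of $\mathbb{T}$ forces $A\sigma A^{-1}=\sigma^{\pm1}$, hence $A^2$ commutes with $\sigma$; since $A^2$ is hyperbolic it has two distinct real eigenlines of irrational slope, $\sigma$ preserves them, so $\sigma$ is diagonalizable over $\mathbb{R}$ and therefore equals $id$, $-id$, or a hyperbolic matrix — giving exactly $\mathbb{T}^3$, the mapping torus of $-id$, or the mapping torus of a hyperbolic automorphism.

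I expect the main obstacle to be the case analysis (a)--(c): putting $\mathbb{T}$ in normal form with respect to the JSJ decomposition and verifying, piece by piece, that a hyperbolic action on $H_1$ of an essential torus is impossible — in particular handling the exceptional small and flat Seifert manifolds with non-unique fibrations and the twisted $I$-bundle over the Klein bottle. The compressible case, the reduction to torus bundles, and the final algebraic step are comparatively routine.
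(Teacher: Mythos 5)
Your overall architecture coincides with the paper's: both directions rest on the JSJ decomposition, normal position of the torus (a cutting torus, or vertical/horizontal in a Seifert piece), and the observation that only $\pm id$ and hyperbolic matrices commute with a hyperbolic automorphism; the sufficiency constructions are identical. The one place you take a genuinely different route is the atoroidal pieces: you dispose of a JSJ torus bounding a simple piece $P$ via Johannson's finiteness theorem (equivalently, hyperbolization of Haken manifolds plus Mostow rigidity), which gives finite order on $H_1(\partial P)$ in one line. The paper instead proves this by hand (Proposition \ref{proposition no atoroidal}, described there as the most delicate step): starting from a properly embedded incompressible surface $S$ meeting $T$ in an essential curve $\gamma$, it uses $\#\bigl(\gamma\cap f^n(\gamma)\bigr)\to\infty$ to extract from $S\cap f^n(S)$ a properly embedded, incompressible, non-$\partial$-parallel annulus, contradicting acylindricity. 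Your version is shorter but imports the full strength of Johannson's deformation theory; the paper's stays within elementary 3-manifold topology (boundary homology, Waldhausen, Epstein). Both are valid.

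There is, however, one concrete hole in your case analysis. After (a)--(c) you assert that ``what survives is that $M$ is a torus bundle over $S^1$ with $\mathbb{T}$ isotopic to a fibre.'' This misses the torus semi-bundles: the closed Seifert manifold $M_6=N\cup_\varphi N$, the union of two twisted $I$-bundles over the Klein bottle glued along their torus boundaries, admits a horizontal incompressible torus (the common boundary $T=\partial N$) but is not a torus bundle over $S^1$; here $\pi_1(M)/\pi_1(T)\cong\mathbb{Z}_2*\mathbb{Z}_2$ rather than $\mathbb{Z}$, so your final step writing $\pi_1(M)=\mathbb{Z}^2\rtimes_\sigma\mathbb{Z}$ does not apply. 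The same configuration reappears in your case (c) when a JSJ torus separates two twisted $I$-bundles (e.g.\ Sol semi-bundles), where ``the regular fibre slope'' is not well defined because that piece has two non-isotopic fibrations; you flag the twisted $I$-bundle as an obstacle but do not resolve it. The fix the paper uses is Lemma \ref{lema borde toro no es anosov}: by the half-lives-half-dies lemma, a torus bounding a compact orientable 3-manifold $N$ on one side carries a rank-one subgroup $K=\ker\bigl(H_1(T)\to H_1(N)\bigr)$ which any $f$ with $f(N)=N$ must preserve, so $f_*$ on $H_1(T)$ has an eigenvalue $\pm1$ and $T$ is not Anosov. With that lemma inserted, your argument closes.
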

Moreover, we have the following result:
\begin{theorem}\label{theorem principal con borde}
Let $M$ be a compact orientable irreducible 3-manifold with non-empty boundary such that all the boundary components are incompressible 2-tori. Then $M$ admits an Anosov torus if and only if $M={\mathbb T}^2\times[0,1]$.
\end{theorem}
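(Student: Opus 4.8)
The plan is to reduce to the closed case already settled in Theorem~\ref{theorem principal} by doubling. The \emph{if} direction is immediate: if $M={\mathbb T}^2\times[0,1]$, pick a hyperbolic automorphism $A$ of ${\mathbb T}^2$ and set $f=A\times\mathrm{id}$; then ${\mathbb T}={\mathbb T}^2\times\{1/2\}$ is $f$-invariant and $f_\#|_{\mathbb T}=A_*$ is hyperbolic, so ${\mathbb T}$ is an Anosov torus.

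For the \emph{only if} direction, assume ${\mathbb T}\subset M$ is an Anosov torus with associated diffeomorphism $f$, and form the double $DM=M\cup_{\partial M}M$. Since every component of $\partial M$ is an incompressible torus, a standard innermost-disk argument shows $DM$ is a closed orientable \emph{irreducible} $3$-manifold, and van Kampen (amalgamation over the incompressible $\partial M$) yields an injection $\pi_1(M)\hookrightarrow\pi_1(DM)$. After isotoping $f$ to respect a collar of $\partial M$, the map $f\sqcup f$ descends to a diffeomorphism $Df$ of $DM$; it leaves ${\mathbb T}$ invariant and acts on $\pi_1({\mathbb T})$ exactly as $f_\#|_{\mathbb T}$, hence hyperbolically. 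So $DM$ admits an Anosov torus, and Theorem~\ref{theorem principal} forces $DM$ to be ${\mathbb T}^3$, the mapping torus of $-\mathrm{id}$, or the mapping torus of a hyperbolic automorphism of ${\mathbb T}^2$.

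Next I would pin down $M$ from the structure of $\pi_1(DM)$, which in all three cases is virtually polycyclic with a transparent subgroup lattice. The key point to establish is: if ${\mathbb Z}^2\le H\le\pi_1(DM)$ and $H$ has \emph{infinite} index, then $H\cong{\mathbb Z}^2$, except in the mapping torus of $-\mathrm{id}$ where $H$ may instead be the Klein bottle group. One gets this by projecting $H$ to the base ${\mathbb Z}$ of the mapping-torus structure: an element projecting nontrivially, together with a commuting fiber element needed to produce a ${\mathbb Z}^2$, would force some nonzero power of the monodromy to fix a rank-one sublattice --- impossible for a hyperbolic monodromy, and forcing the even-power ($\pm1$) case for $-\mathrm{id}$; a trivial projection puts $H$ inside the fiber ${\mathbb Z}^2$; and a nontrivial projection together with rank-two intersection with the fiber makes $H$ of finite index. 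Apply this with $H=\pi_1(M)$: it contains the ${\mathbb Z}^2$ of a boundary torus, and it has infinite index, for otherwise $M$ would be homotopy equivalent to the corresponding finite cover of $DM$, which is closed while $M$ has nonempty boundary. Since $M$ is irreducible with incompressible torus boundary, it is aspherical and Haken, so the isomorphism type of $\pi_1(M)$ determines $M$ up to $I$-bundle: $\pi_1(M)\cong{\mathbb Z}^2$ gives $M\cong{\mathbb T}^2\times[0,1]$ (the only orientable $I$-bundle over ${\mathbb T}^2$), while $\pi_1(M)$ being the Klein bottle group would make $M$ the orientable twisted $I$-bundle over the Klein bottle. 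The latter carries no Anosov torus: the outer automorphism group of the Klein bottle group is finite, its unique index-two abelian subgroup (a copy of ${\mathbb Z}^2$) is characteristic, and every automorphism restricts there as a diagonal $\pm1$ matrix, never hyperbolic. Hence $M\cong{\mathbb T}^2\times[0,1]$.

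The step I expect to be the main obstacle is the middle one: carrying out the subgroup computation cleanly in each of the three closed models, and then legitimately passing from the algebraic conclusion ``$\pi_1(M)\cong{\mathbb Z}^2$'' to the topological conclusion ``$M\cong{\mathbb T}^2\times[0,1]$'' --- which rests on the rigidity of Haken $3$-manifolds (essentially Waldhausen's theorem, through the classification of $I$-bundles) --- together with the explicit verification that the twisted $I$-bundle over the Klein bottle admits no Anosov torus.
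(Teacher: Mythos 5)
Your proposal is correct, and its opening move coincides exactly with the paper's: double $M$ along its incompressible torus boundary, check that the double $DM$ is closed, orientable and irreducible, extend $f$ to $DM$, and invoke Theorem~\ref{theorem principal} to conclude that $DM$ is one of the three model manifolds. Where you diverge is in extracting $M$ from $DM$. The paper reuses the geometric output of its own proof of Theorem~\ref{theorem principal} --- namely that cutting any of the three closed models along the Anosov torus yields $T\times[0,1]$ --- and concludes in two lines that every boundary torus of $M$ is isotopic to $T$ and hence $M=T\times[0,1]$. You instead work at the level of fundamental groups: $\pi_1(M)$ injects into the virtually polycyclic group $\pi_1(DM)$, contains the peripheral $\mathbb{Z}^2$ of a boundary torus, and has infinite index (your cohomological/closed-versus-boundary argument for this is sound), so your subgroup computation in $\mathbb{Z}^3$ and in $\mathbb{Z}^2\rtimes_{A}\mathbb{Z}$ forces $\pi_1(M)\cong\mathbb{Z}^2$ or, only when $DM$ is the mapping torus of $-\mathrm{id}$, the Klein bottle group; then Waldhausen rigidity and the $I$-bundle theorem identify $M$ as ${\mathbb T}^2\times[0,1]$ or the orientable twisted $I$-bundle over the Klein bottle, and your observation that every automorphism of the Klein bottle group acts as $\mathrm{diag}(\pm1,\pm1)$ on its characteristic $\mathbb{Z}^2$ correctly kills the second option (the paper excludes this same manifold elsewhere, in Proposition~\ref{proposition solid torus}, via Lemma~\ref{lema borde toro no es anosov}). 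The trade-off: your route needs only the \emph{statement} of Theorem~\ref{theorem principal} plus external machinery (the $I$-bundle theorem and Haken rigidity), whereas the paper's route is shorter because it leans on the internal structure of its own proof; your version also has the minor virtue of handling an Anosov torus that is not a boundary component of the twisted $I$-bundle, a case the paper's boundary-based lemma does not literally cover.
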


One main reason to consider only irreducible 3-manifolds is that this work arises in the context of partially hyperbolic systems (see below), and only irreducible 3-manifolds admit such dynamics \cite{burago_ivanov}. But on the other hand, it is important to note that it is easy to construct arbitrarily many different non-irreducible manifolds admitting Anosov tori. Indeed, if a manifold $M$ supports an Anosov torus, then it is easy to see that the connected sum of $M$ with any other 3-manifold will admit an Anosov torus. See Remark \ref{remark irreducible manifolds}.
An interesting question to solve would be the following:
\begin{question}
Let $M$ be an orientable non-irreducible 3-manifold. Then, the Kneser-Milnor theorem states that we can decompose $M$, uniquely up to diffeomorphisms, into a finite connected sum:
$$M=M_1\#M_2\#\dots\# M_n$$
where each $M_i$ is either irreducible, or a handle ${\mathbb S}^2\times {\mathbb S}^1$.
If $M$ admits an Anosov torus, is one of the $M_i$ necessarily one of the 3-manifolds listed in Theorem \ref{theorem principal}?
\end{question}
The technical difficulties in answering this question mainly arise from the non-uniqueness of the prime decomposition modulo isotopies. \newline\par

The study of Anosov tori is relevant in the context of {\em partially hyperbolic dynamics} of 3-manifolds. A partially hyperbolic system is a diffeomorphism that leaves invariant three complementary bundles: $E^s$, on which the action of the derivative is contracting; $E^u$, on which it is expanding; and $E^c$ on which the action is not as contracting as in $E^s$, nor as expanding as $E^u$. See for instance \cite{rru_survey}.\par%
It was conjectured by Pugh and Shub in 1995 that conservative partially hyperbolic diffeomorphisms contain an open and dense set of ergodic systems. This conjecture was proven true for 3-manifolds by the authors \cite{rru_invent}. Our question is: can we classify all 3-manifolds supporting non-ergodic partially hyperbolic diffeomorphisms? We conjecture the following:
\begin{question}[Conjecture]
A 3-manifold supports a non-ergodic partially hyperbolic diffeomorphism if and only if it is one of the following:
\begin{enumerate}
\item the 3-torus
\item the mapping torus of $-id$, or
\item the mapping torus of a hyperbolic automorphism of the 2-torus.
\end{enumerate}
\end{question}
In \cite{rru_nilman} we proved that this conjecture is true for nilmanifolds: the only nilmanifold supporting a non-ergodic conservative partially hyperbolic diffeomorphism is the 3-torus.
Note that, according to this conjecture, the only manifold supporting non-ergodic partially hyperbolic diffeomorphisms
would be exactly those enumerated in Theorem \ref{theorem principal}, that is the manifolds admitting an Anosov torus.
Let us remark that 3-manifolds that support partially hyperbolic diffeomorphisms, ergodic or not, are always {\em irreducible}, that is every 2-sphere embedded in the manifold bounds a 3-ball \cite{burago_ivanov}.\newline\par
Another problem in partially hyperbolic dynamics concerns the integrability of $E^c$, the {\em center bundle}, that is, of the bundle whose expansion and contraction rates are bounded, respectively, by the contraction and expansion of the derivative on the {\em strong bundles} $E^s$ and $E^u$. Indeed, the strong bundles are known to be integrable \cite{hirsch_pugh_shub}, but the situation is different with the center bundle. It is an open problem to determine the conditions under which a partially hyperbolic diffeomorphism of a 3-manifold has an integrable center bundle. In \cite{burago_ivanov}, it is shown that there are always foliations {\em almost} tangent to the center bundle. Moreover, a large class of partially hyperbolic diffeomorphisms of the 3-tori have integrable center bundle, as it was recently shown in \cite{brin_burago_ivanov2009}. However, in \cite{rru_nocoherent} we give an example of a partially hyperbolic diffeomorphism of the 3-torus, having a non-integrable center bundle. This example answers a question that had been posed by many authors in the last decades, basically since partially hyperbolic diffeomorphisms where defined, see for instance \cite{hirsch_pugh_shub} and \cite{brin_pesin}.\par
The example we obtained was inspired in the theorem below, which is one of the applications of Theorem \ref{theorem principal} and gives a more accurate description of some dynamically defined foliations of partially hyperbolic diffeomorphisms of 3-manifolds.

\begin{theorem}[\cite{rru_nocoherent}]\label{uniqueint} Let $M$ be a closed orientable 3-dimensional manifold and $f:M\rightarrow M$ be a partially hyperbolic diffeomorphism with dynamically coherent center-unstable bundle $E^c\oplus E^u$. Then, the center-unstable foliation ${\mathcal F}^{cu}$ has no compact leaves.
\end{theorem}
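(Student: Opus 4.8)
The plan is to argue by contradiction: assume $\mathcal{F}^{cu}$ has a compact leaf $L$ and produce an Anosov torus in $M$, then apply Theorem \ref{theorem principal} and rule out the three resulting model manifolds. First I would note that since $E^u\subseteq E^c\oplus E^u=TL$, the torus-to-be $L$ carries the nowhere-vanishing continuous line field $E^u|_L$, so $\chi(L)=0$ by Poincaré--Hopf; passing, if necessary, to a double cover of $M$ and to an iterate of $f$ we may assume $E^s,E^c,E^u$ are orientable, whence $L$ is an orientable surface with $\chi(L)=0$, i.e. $L\cong\mathbb{T}^2$. I would also record that the complete integral manifolds of $E^u|_L$ are exactly the unstable manifolds contained in $L$ (by maximality of the leaf $L$), and these are non-compact; thus $\mathcal{F}^u|_L$ is a one-dimensional foliation of $\mathbb{T}^2$ with no closed leaves.

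The next step is to show that some iterate $f^{k}$ fixes $L$. Each $f^{n}(L)$ is again a compact leaf of $\mathcal{F}^{cu}$, and distinct leaves of a foliation are disjoint; using the uniform expansion of $E^u$, which is tangent to all of these tori, together with the compactness of $M$, one rules out that $\{f^{n}(L)\}_{n\ge 0}$ is infinite (a Hausdorff limit of infinitely many such tori would be a proper sublamination on which $f$ still uniformly expands $E^u$, which cannot happen). Hence $\{f^{n}(L)\}$ is finite and $f^{k}(L)=L$ for some $k\ge 1$.

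Now set $g:=f^{k}|_L\colon\mathbb{T}^2\to\mathbb{T}^2$. Then $g$ uniformly expands the continuous line field $E^u|_L$ and permutes the leaves of $\mathcal{F}^u|_L$, a foliation of $\mathbb{T}^2$ with no closed leaves. Such a foliation has a well-defined irrational asymptotic homological direction $\theta\in\mathbb{RP}^1$ (a rational direction would yield a closed leaf via a Poincaré--Bendixson argument on an annular cross-section), and $g$ fixes $\theta$. Therefore $A:=g_\#=f^{k}_\#|_{\pi_1(L)}\in GL(2,\mathbb{Z})$ fixes an irrational direction. The only such elements of $GL(2,\mathbb{Z})$ are $\pm\mathrm{Id}$ and the hyperbolic ones (finite-order elements $\ne\pm\mathrm{Id}$ have no real invariant line, and parabolic elements fix only a rational line). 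The cases $A=\pm\mathrm{Id}$ are excluded: then $g^{2}$ would be isotopic to the identity while still uniformly expanding a continuous line field, which is impossible — lifting to $\mathbb{R}^2$ the map is at bounded distance from the identity, whereas uniform expansion along the (uniquely integrable) expanded line field forces its leaves to grow exponentially under iteration. Hence $A$ is hyperbolic, so $L$ is an Anosov torus in $M$.

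By Theorem \ref{theorem principal}, $M$ is then $\mathbb{T}^3$, the mapping torus of $-\mathrm{id}$, or the mapping torus of a hyperbolic automorphism of $\mathbb{T}^2$, and it remains to reach a contradiction on each of these; this is the step I expect to be the main obstacle. In each case $M$ is (finitely covered by) a $\mathbb{T}^2$-bundle over $S^1$, so the incompressible torus $L$ is isotopic to a vertical fibre-type torus; one then exploits that $L$ is in fact a \emph{normally contracting} periodic torus (its transverse bundle is the uniformly contracted $E^s$) together with the existence, guaranteed by dynamical coherence, of the foliations $\mathcal{F}^{cs},\mathcal{F}^{cu}$ with $\mathcal{F}^{cs}\cap\mathcal{F}^{cu}=\mathcal{F}^c$, to analyse the induced structure on the ambient bundle and derive the contradiction — equivalently, one invokes the description of the center-(un)stable foliations of partially hyperbolic diffeomorphisms on these model manifolds, none of which admits a compact leaf. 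The bookkeeping steps (Poincaré--Hopf, the $GL(2,\mathbb{Z})$ classification) are routine; the delicate points are the periodicity of $L$, the exclusion of $A=\pm\mathrm{Id}$, and this final model-by-model verification, for which I would refer to \cite{rru_nocoherent}.
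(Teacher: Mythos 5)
First, a point of comparison: the paper you were given does not actually prove Theorem \ref{uniqueint}; it is imported from \cite{rru_nocoherent} and presented only as an application of Theorem \ref{theorem principal}, so there is no in-paper proof to measure your argument against. Judged on its own, the first part of your reduction is sound and is indeed the way Theorem \ref{theorem principal} enters: a compact leaf $L$ carries the line field $E^u|_L$, so $\chi(L)=0$; orientability of $M$ and (after a cover and an iterate) of $E^s$ makes $L$ a torus; $\mathcal{F}^u|_L$ has no closed leaves, hence an irrational asymptotic direction preserved by $f^k_\#|_{\pi_1(L)}$; an element of $GL(2,\mathbb{Z})$ fixing an irrational line is $\pm\mathrm{Id}$ or hyperbolic; and $\pm\mathrm{Id}$ is excluded by the quasi-isometry/growth argument on the universal cover. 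All of this correctly shows that a \emph{periodic} compact leaf is an Anosov torus.

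The genuine gap is the final step, which you flag but underestimate. Reaching a contradiction cannot consist of ``ruling out the three model manifolds'': the paper explicitly remarks, right after stating Theorem \ref{uniqueint}, that \cite{rru_nocoherent} constructs partially hyperbolic diffeomorphisms of $\mathbb{T}^3$ possessing invariant tori tangent to $E^c\oplus E^u$. So the models are not ruled out as ambient manifolds, and knowing that $L$ is an Anosov torus sitting in one of them yields no contradiction by itself; the contradiction must exploit that $L$ is a leaf of a \emph{global} $f$-invariant foliation tangent to $E^{cu}$ (for instance through the holonomy of the compact leaf $L$, which is normally attracting since its normal bundle is the contracted $E^s$, and the behaviour of nearby leaves, or through the structure of $\mathcal{F}^{cu}$ on $M\setminus L$). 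Your proposed substitute --- ``invoke the description of the center-unstable foliations on these models, none of which admits a compact leaf'' --- is essentially invoking the theorem to be proved. A secondary, probably fillable, gap is the periodicity of $L$: the assertion that a Hausdorff limit of infinitely many disjoint compact leaves ``would be a proper sublamination on which $f$ uniformly expands $E^u$, which cannot happen'' is exactly what needs justification, since a priori nothing forbids an invariant lamination by $cu$-tori; one needs an actual argument (e.g.\ incompressibility of $cu$-tori together with Kneser--Haken finiteness, or a normal-hyperbolicity/basin argument).
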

Indeed, to study the integrability of the center bundle, it is standard to analyze the behavior of the so called {\em center-stable} and {\em center-unstable} bundles, that are respectively, the Whitney sums $E^s\oplus E^c$ of the center and contracting ones, and $E^c\oplus E^u$ of the center and expanding ones. One says that the center-stable bundle $E^s\oplus E^c$ is {\em dynamically coherent} if it is integrable, that is, if there exists an invariant foliation tangent to it. In this case, the tangent foliation is called {\em center-stable foliation}. Analogously one defines the center-unstable foliation. \par
Observe that Theorem \ref{uniqueint} does not prevent the existence of tori, even invariant, tangent to the center-unstable bundle. Theorem \ref{uniqueint} asserts the impossibility of the existence of such tori as part of an invariant foliation tangent to the center-unstable bundle. In \cite{rru_nocoherent} we give examples of a partially hyperbolic diffeomorphism of $\mathbb{T}^3$ with  center unstable tori. The center foliation of this example in particular is not uniquely integrable although some of these examples are {\em dynamically coherent}, that is both the center-stable and the center-unstable bundles are dynamically coherent.\newline\par

We can mention a third remarkable problem in partially hyperbolic dynamics, that is their classification in 3-manifolds. In particular, the classification of the manifolds that support such a dynamics. In this context, let us note that any compact manifold tangent to the strong bundles is an Anosov torus. Theorem \ref{theorem principal} precludes the existence of such tori except in the above mentioned cases. \newline\par
The idea of the proof is the following: first, let us say that the sufficiency is straightforward, and is explained in Section \ref{s_teo1}. The proof of the necessity, on the other hand, is strongly based in the so-called JSJ-decomposition (Theorem \ref{teorema jsj}): A compact irreducible orientable 3-manifold can be decomposed, uniquely up to isotopy, by cutting along incompressible tori, into components that are either Seifert manifolds, or atoroidal manifolds. A Seifert manifold is a manifold foliated by circles, and an atoroidal manifold is one that does not admit incompressible tori except, possibly, those isotopic to a component of its boundary. Now, an Anosov torus is always incompressible (Theorem \ref{teorema anosov incompresible}), and this implies that the JSJ-decomposition can be chosen so that no atoroidal component contains it. In Proposition \ref{proposition reduccion} we prove that there are three possibilities for an Anosov torus: either (a) it is one of the cutting tori, or else it is contained in one of the Seifert components, in which case it is either (b) union of circles of the Seifert foliation or (c) transverse to all circles of the Seifert foliation. We explain below how these three cases are dealt with.\par
Let us mention that some cases in Theorem \ref{theorem principal} had been already studied, although not with this name, by F. Waldhausen in his classification of graph manifolds \cite{waldhausen}; namely, the case where all the components in the JSJ-decomposition are Seifert manifolds. For the sake of completeness we also present proofs of these cases.\newline\par%
The paper is organized as follows. We will include preliminary concepts of 3-manifolds in order that this paper be as self-contained as possible, see Section \ref{preliminaries}. The goal of Section \ref{preliminaries} is to introduce all concepts that are necessary to understand Proposition \ref{proposition reduccion}, which contains the architecture of the proof of Theorem \ref{theorem principal}, as mentioned in the previous paragraphs, the devoted reader may go to the book \cite{hatcher} for complete accounts on the topology of 3-manifolds. . Section \ref{section proof proposition} is devoted to proving this proposition. Next sections are devoted to proving cases (\ref{item toro en JSJ decomposition}), (\ref{item vertical torus}) and (\ref{item horizontal torus}) mentioned above.\par
In Section \ref{seccion horizontal torus} we prove case (\ref{item horizontal torus}), namely, the case in which the manifold $M$ is Seifert and the Anosov torus is transverse to all the fibers of the Seifert fibration, that is, it is a {\em horizontal} torus. This is reduced to a case by case proof, since there are only six Seifert manifolds that admit a horizontal torus. We prove in this case that $M$ is either the 3-torus, or the mapping torus of $-id$.\par
In Section \ref{section vertical torus} we study the case (\ref{item vertical torus}), that is, the case in which the Anosov torus is contained in a Seifert component and it is union of Seifert fibers, hence it is a {\em vertical} torus.  We prove also in this case that $M$ is either the 3-torus, or the mapping torus of $-id$. There is also the possibility that the Seifert component be ${\mathbb T}^2\times[-1,1]$, but this possibility is studied in Section \ref{section atoroidal}.\par
In Section \ref{section atoroidal} we study the last case (\ref{item toro en JSJ decomposition}), in which the Anosov torus is part of the JSJ-decomposition. We prove in this case that the the Anosov torus cannot be boundary of an atoroidal component, unless the component is ${\mathbb T}^2\times[-1,1]$. \par
Finally, in Section \ref{s_teo1}, we finish the proofs of Theorems \ref{theorem principal} and \ref{theorem principal con borde}. The idea, as follows from Sections \ref{seccion horizontal torus}, \ref{section vertical torus} and \ref{section atoroidal}, is that either $M$ is Seifert and it is the 3-torus or the mapping torus of $-id$, or else, the JSJ-decomposition consists of only one torus (the Anosov torus), and in this last case, after cutting $M$ along this torus, we obtain the Anosov torus cross the interval. To reobtain $M$, we have to glue the boundary tori by means of a diffeomorphism which will have to commute with the hyperbolic dynamics on the Anosov torus. The only possibilities for the restriction of the gluing diffeomorphism to the Anosov torus are: $\pm id$ and a hyperbolic diffeomorphism. In the first case, we have $M$ is the mapping torus of $\pm id$, hence it is Seifert; and in the last case, we obtain that $M$ is the mapping torus of a hyperbolic automorphism of the 2-torus. \par
The rest of this section is devoted to proving the sufficiency part of Theorem \ref{theorem principal}; and Theorem \ref{theorem principal con borde}, which follows from Theorem \ref{theorem principal}.
\section{Preliminaries}\label{preliminaries}
Let $M$ be a 3-dimensional manifold. In this work we classify irreducible 3-manifolds admitting Anosov tori. A manifold $M$ is {\em irreducible} if every 2-sphere $\mathbb{S}^2$ embedded in the manifold bounds a 3-ball. A 2-torus $T$ embedded in $M$ is an {\em Anosov torus} if there exists a diffeomorphism $f:M\to M$ such that
$f(T)=T$ and the action induced by $f$ on $\pi_1(T)$, that is, $f_\#|_{T}:\pi_1(T)\to\pi_1(T)$, is a hyperbolic automorphism. Equivalently, $f$ restricted to $T$ is isotopic to a hyperbolic automorphism. \par
We shall assume from now on, that $M$ is an irreducible 3-manifold. In what follows, we will focus on what is called the JSJ-decomposition of $M$ (see below). That is, we will cut $M$ along certain kind of tori, called incompressible, and will obtain certain 3-manifolds with boundary that are easier to handle, which are, respectively, Seifert manifolds, and atoroidal and acylindrical manifolds. Let us introduce these definitions first.\par
An orientable surface $S$ embedded in $M$ is {\em incompressible} if
the homomorphism induced by the inclusion map
$i_\#:\pi_1(S)\hookrightarrow\pi_1(M)$ is injective; or,
equivalently, if there is no embedded disc
$D^2\subset M$ such that $D\cap S=\partial D$ and $\partial D\nsim
0$ in $S$ (see, for instance, \cite[Page 10]{hatcher}). We also require that $S\ne{\mathbb S}^2$.\par
A manifold with or without boundary is {\em Seifert}, if it admits a one dimensional foliation by closed curves, called a Seifert fibration. The boundary of a Seifert manifold with boundary consists of finite union of tori. There are many examples of Seifert manifolds, for instance ${\mathbb S}^3$. See also Example \ref{model seifert fibering} for models of Seifert manifolds. \par
The other type of manifold obtained in the JSJ-decomposition is atoroidal and acylindrical manifolds. A 3-manifold with boundary $N$ is {\em atoroidal} if every incompressible torus is {\em $\partial$-parallel}, that is, isotopic to a subsurface of $\partial N$. A 3-manifold with boundary $N$ is {\em acylindrical} if every incompressible annulus $A$ that is {\em properly embedded}, i.e.  $\partial A\subset \partial N$, is $\partial$-parallel, by an isotopy fixing $\partial A$.\par
As we mentioned before, a closed irreducible 3-manifold admits a natural decomposition into Seifert pieces on one side, and atoroidal and acylindrical components on the other:
\begin{theorem}[JSJ-decomposition \cite{jaco_shalen}, \cite{johannson}]\label{teorema jsj}
If $M$ is an irreducible closed orientable 3-manifold, then there exists a collection of disjoint incompressible tori ${\mathcal T}$ such that each component of $M\setminus {\mathcal T}$ is either Seifert, or atoroidal and acylindrical. Any minimal such collection is unique up to isotopy. This means, if ${\mathcal T}$ is a collection as described above, it contains a minimal sub-collection $m({\mathcal T})$ satisfying the same claim. All collections $m({\mathcal T})$ are isotopic.\end{theorem}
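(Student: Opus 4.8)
The statement is classical, due to Jaco--Shalen \cite{jaco_shalen} and Johannson \cite{johannson}; I only indicate here how the proof goes, referring to those monographs, or to \cite{hatcher}, for the details. The plan has three parts: produce \emph{some} finite collection ${\mathcal T}$ with the stated property, extract from it a minimal sub-collection $m({\mathcal T})$, and show that any two minimal collections are isotopic.

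For existence, the first thing I would invoke is the Kneser--Haken finiteness theorem: after fixing a triangulation of $M$, there is an a priori bound on the number of pairwise disjoint, pairwise non-parallel incompressible surfaces that can be embedded simultaneously, once they have been put in normal form. So I would choose ${\mathcal T}$ to be a \emph{maximal} finite collection of disjoint incompressible tori, no two parallel and none bounding a solid torus or contained in a boundary collar, and then argue that every component $N$ of $M\setminus{\mathcal T}$ is either atoroidal and acylindrical, or Seifert. Indeed, if $N$ contained an essential torus that is not $\partial$-parallel, maximality of ${\mathcal T}$ would force that torus to be isotopic in $M$ into ${\mathcal T}\cup\partial N$; the real work is to show that such an abundance of almost-$\partial$-parallel essential tori and essential annuli inside $N$ forces $N$ to carry a Seifert fibration inducing the product fibration on $\partial N$. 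This is precisely the construction of the characteristic (Seifert) submanifold of $N$, and it is the heart of the matter.

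Passing to $m({\mathcal T})$ is comparatively soft: I would delete tori from ${\mathcal T}$ one at a time, discarding $T\in{\mathcal T}$ whenever the deletion preserves the Seifert/atoroidal dichotomy on the component obtained by regluing along $T$, or makes $T$ become $\partial$-parallel, and observe that the process terminates since ${\mathcal T}$ is finite.

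The genuinely hard part --- the reason the argument fills a book --- is uniqueness. The clean route is through the Jaco--Shalen--Johannson characteristic submanifold $\Sigma(M)\subset M$: the isotopically unique maximal codimension-zero submanifold that is Seifert fibered, meets its frontier coherently, and into which every essential torus and every essential annulus of $M$ can be isotoped. Its existence and isotopy-uniqueness are established by a long innermost-disc and cut-and-paste argument that organizes all essential surfaces of $M$ simultaneously. Granting it, the frontier of $\Sigma(M)$ is a canonical system of tori and annuli; one then checks that a minimal torus collection ${\mathcal T}$ as above is isotopic to the union of the frontier tori of $\Sigma(M)$ that are not $\partial$-parallel, whence any two minimal collections are isotopic to each other. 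I expect the construction and uniqueness of $\Sigma(M)$ to be the main obstacle, and for it I would simply cite \cite{jaco_shalen} and \cite{johannson} rather than attempt a reproof.
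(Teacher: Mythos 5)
The paper does not prove this theorem; it is quoted as a classical result with references to Jaco--Shalen and Johannson, exactly as you ultimately do for the hard parts (existence of the characteristic submanifold and uniqueness). Your outline of the standard argument --- Kneser--Haken finiteness for existence, finite deletion for minimality, and the characteristic submanifold for uniqueness --- is an accurate sketch of the cited proof, so your treatment matches the paper's.
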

Any minimal family of incompressible tori as described above is called a {\em JSJ-decomposition} of $M$. When it is clear from the context we shall also call JSJ-decomposition the set of pieces obtained by cutting the manifold along these tori. Note that if $M$ is either atoroidal or Seifert, then ${\mathcal T}=\emptyset$. \par
The idea of the proof of Theorem \ref{theorem principal} is that, given an Anosov torus $T$, we can ``place" $T$ so that either $T$ belongs to the family ${\mathcal T}$, or else $T$ is in a Seifert component, and it is either transverse to all fibers, or it is union of fibers of this Seifert component. See Proposition \ref{proposition reduccion}.\par
It is important to note the following property of Anosov tori:
\begin{theorem}\cite{rru_nilman}\label{teorema anosov incompresible}
Anosov tori are incompressible.
\end{theorem}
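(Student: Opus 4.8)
The plan is to combine a single piece of $3$-manifold topology with one elementary fact about hyperbolic integer matrices, by proving two implications: (i) a compressible torus in an irreducible $3$-manifold is separating; and (ii) a separating torus in a closed oriented $3$-manifold is never an Anosov torus. Granting both, an Anosov torus must be non-separating by (ii), hence admits no compressing disc by the contrapositive of (i), that is, it is incompressible.

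For (i) I would run the standard compression argument. Suppose $T$ is compressible and pick a compressing disc $D\subset M$, so $D\cap T=\partial D$ with $\partial D$ essential in $T$. Compressing $T$ along $D$ produces an embedded $2$-sphere $S$, and $S$ is homologous to $T$ mod $2$ in $M$ (both are represented by the boundary components of a regular neighborhood of $T\cup D$). Since $M$ is irreducible, $S$ bounds a $3$-ball, so $[S]=0$ and hence $[T]=0$ in $H_2(M;\mathbb{Z}/2)$; a torus that is null-homologous mod $2$ in $M$ separates $M$.

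Step (ii) is the crux. Let $T$ separate $M$, write $M\setminus T=U_1\sqcup U_2$, and let $f\colon M\to M$ be a diffeomorphism with $f(T)=T$; put $A:=f_\#|_T\in\mathrm{GL}_2(\mathbb{Z})$, identified with the action on $H_1(T;\mathbb{Z})\cong\mathbb{Z}^2$. Since $f$ permutes the two components of $M\setminus T$, the diffeomorphism $f^{2}$ maps $X:=\overline{U_1}$ to itself, $X$ being a compact oriented $3$-manifold with $\partial X=T$. By the half-lives-half-dies lemma, the kernel $L$ of the inclusion-induced map $\iota_*\colon H_1(T;\mathbb{Q})\to H_1(X;\mathbb{Q})$ is a line in $H_1(T;\mathbb{Q})\cong\mathbb{Q}^2$. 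Because $f^{2}|_X$ restricts to $f^{2}|_T$ on $\partial X$, we have $(f^{2}|_X)_*\circ\iota_*=\iota_*\circ A^{2}$, so $A^{2}(L)\subseteq L$; being invertible, $A^{2}$ preserves $L$ and therefore has a rational eigenvalue. But a rational eigenvalue of a matrix in $\mathrm{GL}_2(\mathbb{Z})$ must equal $\pm1$ (it is a rational, hence integral, root of a monic integer polynomial and divides $\det=\pm1$), whereas if $A$ is hyperbolic the eigenvalues of $A^{2}$ lie off the unit circle. This contradiction shows $A$ is not hyperbolic, i.e.\ $T$ is not an Anosov torus.

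The only input that is not pure bookkeeping is the half-lives-half-dies lemma; morally, (ii) expresses that each side of a separating torus singles out a canonical rational slope on it -- the slope that dies homologically in that side -- and this slope is respected by every self-homeomorphism of $M$ fixing $T$, hence incompatible with a hyperbolic action, whose only invariant lines are its two irrational eigendirections. So the main obstacle is essentially expository, namely handling (i) with care. Finally, for a compact $M$ with non-empty incompressible toral boundary (the setting of Theorem \ref{theorem principal con borde}) one reduces to the closed case by doubling $M$ along $\partial M$: the double $DM$ is again irreducible, an Anosov torus $T\subset M$ stays an Anosov torus in $DM$, and since $\pi_1(M)\hookrightarrow\pi_1(DM)$, incompressibility of $T$ in $DM$ gives incompressibility of $T$ in $M$.
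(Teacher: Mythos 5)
Your proof is correct. Strictly speaking the paper does not prove this theorem at all --- it is quoted from \cite{rru_nilman} --- so there is no in-text proof to match; but your step (ii) is essentially the one argument of this type that the paper does carry out, namely Lemma \ref{lema borde toro no es anosov}: apply the half-lives-half-dies identity (Lemma \ref{lema half homology}) to the compact piece $X$ with $\partial X=T$, obtain an $f^2$-invariant rational line in $H_1(T;\mathbb{Q})$, and conclude that the induced matrix has an integer eigenvalue dividing $\det=\pm1$, which is incompatible with hyperbolicity. What you add, and what makes the argument self-contained, is the reduction (i): compressing $T$ along a disc yields an embedded sphere in the same mod-$2$ homology class (the removed annulus together with the two disc caps bounds the $3$-chain $D\times[-1,1]$), irreducibility kills that class, and a mod-$2$ null-homologous torus separates; passing to $f^{2}$ then absorbs a possible swap of the two sides. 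This homological route is leaner than the more traditional dichotomy ``a compressible torus in an irreducible $3$-manifold bounds a solid torus or lies inside a ball'' followed by invariance of the meridian slope. Your care in doubling first when $\partial M\neq\emptyset$ is also needed, since otherwise $\partial\overline{U_1}$ could contain components of $\partial M$ besides $T$ and the kernel of $H_1(T;\mathbb{Q})\to H_1(\overline{U_1};\mathbb{Q})$ would not be forced to be a line.
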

An Anosov torus in an atoroidal component will then be $\partial$-parallel to a component of its boundary. In this case, we can assume $T\in {\mathcal T}$. On the other hand, the Theorem of Waldhausen below, guarantees that we can always place an incompressible torus in a Seifert manifold in a ``standard" form; namely, the following:  a surface is {\em horizontal} in a Seifert manifold if it is transverse to all fibers, and {\em vertical} if it is union of fibers:
\begin{theorem}[Waldhausen \cite{waldhausen}]\label{teorema waldhausen} Let $M$ be a compact connected Seifert manifold, with or without boundary. Then any incompressible surface can be isotoped to be horizontal or  vertical.
\end{theorem}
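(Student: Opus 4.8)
The plan is to reduce everything to understanding how $S$ sits with respect to a fixed Seifert fibration $p\colon M\to B$ onto the base orbifold, by cutting $B$ into cells. First I would carry out the standard reductions: since $S$ is orientable and $M$ is orientable, $S$ is two-sided; I may assume $S$ is connected, is not a sphere or a disk, and — after finitely many $\partial$-compressions, which do not affect the cases that matter (and are vacuous when $\partial S=\emptyset$, in particular for tori) — is $\partial$-incompressible as well as incompressible. Next, fix fibered solid-torus neighbourhoods $N_1,\dots,N_k$ of the exceptional fibres and choose a system $\Gamma\subset B$ of disjoint embedded arcs and circles, disjoint from the cone points, meeting $\partial B$ transversally only at endpoints of arcs, and such that $B$ cut along $\Gamma$ is a disjoint union of disks each containing at most one cone point. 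Then $V:=p^{-1}(\Gamma)$ is a disjoint union of vertical annuli and tori, and $M$ cut along $V$ is a disjoint union of fibered solid tori $W_j=p^{-1}(\delta_j)$, one for each cell $\delta_j$, each containing at most one exceptional fibre.

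Then I would put $S$ in minimal position with respect to $V$: isotope $S$ transverse to $V$ so as to minimise (lexicographically, in the usual way) the number of components of $S\cap V$. The core of the argument is the normal-form analysis of $S\cap V$. By irreducibility of $M$ and incompressibility of $S$, an innermost-disk argument removes any circle of $S\cap V$ bounding a disk in $V$ or in $S$; by $\partial$-incompressibility, an outermost-arc argument removes any arc of $S\cap V$ that is $\partial$-parallel in its piece of $V$ or cuts off a $\partial$-compressing disk from $S$. Hence every component of $S\cap V$ is essential in the annulus or torus of $V$ containing it, and is therefore either vertical (a fibre) or horizontal (transverse to every fibre of that piece). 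A further use of minimality shows that on a single component of $V$ one cannot simultaneously have a fibre and a transverse arc or circle of $S\cap V$ — such a configuration lets one isotope $S$ across that piece and decrease the complexity — so the curves of $S\cap V$ are \emph{all} vertical or \emph{all} horizontal on each component of $V$.

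Finally I would assemble the pieces $S_j:=S\cap W_j$, each a properly embedded, incompressible and $\partial$-incompressible surface in the fibered solid torus $W_j$ whose boundary consists of essential curves of a single type on the vertical annuli and tori of $\partial W_j\cap V$. Using the classification of incompressible, $\partial$-incompressible properly embedded surfaces in a solid torus together with minimality of $|S\cap V|$, in the horizontal case $S_j$ is a union of meridian disks of $W_j$ (each meeting the exceptional core once and every regular fibre transversally), while in the vertical case $S_j$ is a union of vertical annuli. Gluing the $S_j$ back along $V$, $S$ becomes either transverse to every fibre of $M$ or a union of fibres; since $S$ is connected the same alternative holds on every piece, which is exactly the claimed dichotomy. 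The main obstacle is precisely the minimal-position analysis of the second paragraph — both discarding inessential intersection curves and arcs and ruling out ``mixed'' vertical/horizontal behaviour along a single vertical annulus — together with the need to treat separately the few degenerate base orbifolds (a disk, or $S^2$ with at most three cone points, and the small closed cases) for which $\Gamma$ is too small for the generic argument and an ad hoc analysis of the one or two resulting pieces $W_j$ is required.
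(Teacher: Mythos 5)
This theorem is not proved in the paper at all: it is quoted from Waldhausen (see also Hatcher's notes, Proposition~1.11), so there is no in-paper argument to compare against. Your sketch reconstructs what is essentially the standard proof: cut the base orbifold by a system $\Gamma$ of arcs and circles into disks containing at most one cone point, so that $V=p^{-1}(\Gamma)$ is a union of vertical annuli and tori and the complementary pieces are fibered solid tori; put $S$ in minimal position with respect to $V$; eliminate inessential circles and arcs of $S\cap V$ by innermost-disk and outermost-arc arguments using irreducibility, incompressibility and $\partial$-incompressibility; conclude that each curve of $S\cap V$ is vertical or horizontal in its annulus or torus; and finish with the classification of incompressible, $\partial$-incompressible surfaces in a fibered solid torus (meridian disks or vertical annuli). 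That is the right architecture, and the lemmas you invoke are the correct ones.

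Two remarks. First, the statement as quoted in the paper silently omits the hypothesis of $\partial$-incompressibility, which is genuinely needed when $\partial S\ne\emptyset$: an annulus of slope $(1,2)$ in a product-fibered solid torus is incompressible but isotopic to neither a vertical nor a horizontal surface. You correctly localize this by performing $\partial$-compressions first and noting the hypothesis is vacuous for closed surfaces --- in particular for tori, the only case the paper uses. Second, the ``mixed type'' exclusion on a single component of $V$ is actually automatic from disjointness rather than minimality: on a vertical annulus an essential circle is isotopic to a fiber and separates the two boundary fibers, so it must meet any spanning arc, and on a torus disjoint essential circles are parallel. The places where real work remains in your outline are (i) showing that $S_j=S\cap W_j$ is $\partial$-incompressible \emph{in} $W_j$ (a $\partial$-compressing disk for $S_j$ either lowers $|S\cap V|$ or contradicts $\partial$-incompressibility of $S$ in $M$), and (ii) the degenerate bases you already flag, where $\Gamma$ is empty and one argues directly in a single solid torus or small piece. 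As a blueprint of the classical argument, the proposal is sound.
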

The architecture of the proof of Theorem \ref{theorem principal}, as mentioned above, is contained in the following proposition.
\vspace*{1em}
\begin{proposition}\label{proposition reduccion} Let $T$ be an Anosov torus of a closed irreducible orientable manifold $M$. Then, there exists a diffeomorphism $f:M\rightarrow M$ and a JSJ-decomposition ${\mathcal T}$ such that
\begin{enumerate}
\item \label{item hyperbolic toral automorphism} $f|T$ is a hyperbolic toral automorphism,
\item \label{item f deja invariante JSJ} $f({\mathcal T})={\mathcal T}$, and
\item \label{item JSJ-decomposition}one of the following holds
\begin{enumerate}
\item\label{item toro en JSJ decomposition} $T\in{\mathcal T}$
\item\label{item vertical torus} $T$ is a vertical torus in a Seifert component of $M\setminus{\mathcal T}$, and $T$ is not $\partial$-parallel in this component.
\item\label{item horizontal torus} $M$ is a Seifert manifold (${\mathcal T}=\emptyset$), and $T$ is a horizontal torus,
\end{enumerate}
\end{enumerate}
\end{proposition}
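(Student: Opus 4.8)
The plan is to put $T$ into standard position with respect to a JSJ family by purely topological arguments, using only that $T$ is incompressible (Theorem \ref{teorema anosov incompresible}), and then to adjust the diffeomorphism exhibiting $T$ as an Anosov torus. Three standard facts form the backbone. (i) An incompressible torus in an irreducible $3$-manifold can be isotoped so that it meets a prescribed JSJ family $\mathcal T$ only in essential curves, and then --- using the description of essential annuli in Seifert and in atoroidal-acylindrical pieces by innermost-disc arguments (see \cite{hatcher}) --- so that it is either isotopic to a component of $\mathcal T$, or contained in the interior of a single Seifert piece. (ii) Inside a Seifert piece, an incompressible surface is isotopic to a horizontal or a vertical one (Theorem \ref{teorema waldhausen}). (iii) The mapping class group of $T^2$ is $GL(2,\mathbb Z)$, acting faithfully on $\pi_1(T^2)$; hence a self-diffeomorphism of $T^2$ inducing a prescribed automorphism of $\pi_1$ is isotopic to the corresponding linear automorphism.

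First I would fix a diffeomorphism $g:M\to M$ with $g(T)=T$ and $g_\#|_T$ hyperbolic and apply (i), (ii) to isotope $T$ --- conjugating $g$ along the way, so that $g(T)=T$ and the hyperbolicity of $g_\#|_T$ persist --- into one of three positions relative to a JSJ family $\mathcal T$: isotopic to a component of $\mathcal T$; vertical in a Seifert piece $N$ of $M\setminus\mathcal T$; or horizontal in such a piece $N$. Then I would trim the degenerate cases. If $T$ is isotopic to a component of $\mathcal T$, replace that component by $T$ itself; the resulting family is a diffeomorphic image of $\mathcal T$, hence again a JSJ family, so $T\in\mathcal T$, which is case \ref{item toro en JSJ decomposition}. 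If $T\subset N$ is vertical but $\partial$-parallel in $N$, then $T$ is isotopic to a component of $\partial N\subseteq\mathcal T$, again case \ref{item toro en JSJ decomposition}; if it is vertical and not $\partial$-parallel, that is case \ref{item vertical torus}. If $T\subset N$ is horizontal, then $\partial N=\emptyset$, since a closed horizontal surface in a connected Seifert manifold meets every fibre and would therefore meet any boundary fibre; as no complementary piece of a non-empty $\mathcal T$ is closed, this forces $\mathcal T=\emptyset$ and $M=N$ Seifert, which is case \ref{item horizontal torus}.

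Next I would upgrade $g$ to the desired $f$. As $g$ is a diffeomorphism, $g(\mathcal T)$ is again a JSJ family, hence isotopic to $\mathcal T$ by the uniqueness part of Theorem \ref{teorema jsj}; and since $g(T)=T$, the torus $T$ occupies the same configuration relative to $g(\mathcal T)$ as relative to $\mathcal T$. A relative form of this uniqueness then yields an ambient isotopy fixing $T$ and carrying $g(\mathcal T)$ back to $\mathcal T$ --- seen in case \ref{item toro en JSJ decomposition} by cutting $M$ along $T$, where $g$ descends to a diffeomorphism and $\mathcal T\setminus\{T\}$ is the induced JSJ family, and trivially in cases \ref{item vertical torus}, \ref{item horizontal torus} when $\mathcal T=\emptyset$ --- so after composing we may assume $g(\mathcal T)=\mathcal T$ with $g(T)$ and $g|_T$ unaltered. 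Finally $g_\#|_T$ is a hyperbolic matrix $A\in GL(2,\mathbb Z)$, so by (iii) $g|_T$ is isotopic in $T$ to the linear automorphism $L_A$; since $T$ is two-sided ($M$ and $T$ orientable), one extends this isotopy over a bicollar of $T$ chosen disjoint from $\mathcal T\setminus\{T\}$ to an ambient isotopy $\Psi$ of $M$, and sets $f:=\Psi\circ g$. Then $f|_T=L_A$ is a hyperbolic toral automorphism, $f(T)=T$, and $f(\mathcal T)=\mathcal T$, which is the assertion of the proposition.

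The hard part is the standard-position step with its trichotomy: disengaging an incompressible torus from $\mathcal T$ except along essential curves, and recognising that what survives is a JSJ torus, a vertical torus in a Seifert piece, or --- only when $M$ is itself Seifert --- a horizontal torus. The bookkeeping that assembles $f$ is comparatively soft once uniqueness of the JSJ decomposition is in hand, although some care is needed to keep $T$ fixed while normalising $\mathcal T$, which is what the cut-along-$T$ argument accomplishes in case \ref{item toro en JSJ decomposition}.
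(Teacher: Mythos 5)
Your proposal is correct and follows essentially the same route as the paper: put $T$ in standard position via the Enclosing Property and Waldhausen's theorem, rule out boundary for the horizontal case by the fact that a horizontal surface meets every fibre, then normalise the diffeomorphism by linearising $g|_T$ in a bicollar and using (relative) uniqueness of the JSJ family after cutting along $T$ to achieve $g(\mathcal T)=\mathcal T$. The only cosmetic differences are that the paper cites Johannson's Enclosing Property as a black box (which also delivers non-$\partial$-parallelism directly) where you sketch it by innermost-disc arguments and absorb the $\partial$-parallel vertical case into case \ref{item toro en JSJ decomposition}, and that the paper performs the linearisation before the JSJ-normalisation rather than after.
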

\vspace*{1em}
The proposition above allows us to split the proof of Theorem \ref{theorem principal} into cases.
Note that case (\ref{item vertical torus}) includes the case in which $M$ is a Seifert manifold and $T$ is a vertical torus.
Before addressing to the proof of Proposition \ref{proposition reduccion}, which is done in Section \ref{section proof proposition}, let us describe better the models of Seifert manifolds.
\begin{example}\label{model seifert fibering} {\em The {\em model Seifert fibering} of the solid torus ${\mathbb D}^2\times{\mathbb S}^1$, or {\em standard fibered torus} consists of the orbits of the suspension of the rotation of $D^2$ by the angle $2\pi p/q$, where $p$ and $q$ are coprime integers. If $q\ne 1$, then all the fibers $\{z\}\times {\mathbb S}^1$ with $z\ne0$ are {\em regular}: they have a neighborhood where the fibration is homeomorphic to a product fibration. $\{0\}\times {\mathbb S}^1$ is {\em exceptional}, that is, not regular.}
\end{example}
Model Seifert fiberings are fundamental in the geometric description of Seifert manifolds, due to the following:
\begin{theorem}[Epstein \cite{epstein}] \label{teorema epstein} Every fiber in an arbitrary Seifert manifold has a neighborhood which is fiber-preserving diffeomorphic to a neighborhood of a fiber in a model Seifert fibering.
\end{theorem}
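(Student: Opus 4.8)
The plan is to study the circle foliation in a neighborhood of a single fiber $C$, extract its holonomy along $C$, and show that this holonomy is a finite-order rotation --- which is precisely the data defining a model fibered torus. Fix a point $x_0\in C$. Since the foliation is a smooth one-dimensional foliation, the flow-box (tubular neighborhood) theorem furnishes a small embedded $2$-disk $D$ transverse to the foliation with $x_0\in\operatorname{int}D$ and $D\cap C=\{x_0\}$. Following leaves, every fiber through a point of $D$ sufficiently close to $x_0$ returns to $D$, and this defines a first-return germ $h\colon (D,x_0)\to(D,x_0)$ fixing $x_0$. The entire statement then reduces to the local dynamics of $h$: if $h$ has finite order $q$ and is conjugate to a rotation by $2\pi p/q$ with $\gcd(p,q)=1$, then the union of the fibers meeting $D$ is a solid torus carrying exactly the model Seifert fibering of Example \ref{model seifert fibering}, with $C$ regular when $q=1$ and exceptional otherwise.

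The main obstacle, and the genuinely deep content of the theorem, is to prove that $h$ has finite order, equivalently that the period function $x\mapsto(\text{least period of the fiber through }x)$ is locally bounded near $C$. This fails in higher dimensions (Sullivan's examples of periodic flows with unbounded period), so the proof must use dimension three in an essential way. I would follow Epstein's strategy: consider the \emph{bad set} $B$ of points at which the period function is not locally bounded; $B$ is closed and saturated by fibers. Assuming $B\neq\emptyset$, one analyzes the transverse ($2$-dimensional) structure of the foliation near $B$ and derives a contradiction from a length estimate --- fibers approaching $B$ would be forced to wind arbitrarily many times through a fixed compact flow box, which is incompatible with the two-dimensionality of the transversal and the compactness of $M$. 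Establishing this local boundedness is the hard step; everything around it is comparatively formal.

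Granting that the period is locally bounded, a nearby fiber meets $D$ in a bounded number of points before closing up, so $h$ is pointwise periodic with bounded period and hence of finite order $q$ (one may invoke Montgomery's theorem on pointwise periodic homeomorphisms). A finite-order diffeomorphism of a disk fixing its center is smoothly linearizable --- average an $h$-invariant Riemannian metric, or apply Bochner's linearization of compact group actions --- so $h$ is smoothly conjugate to the rotation of $D^2$ by $2\pi p/q$ for coprime integers $p,q$. The saturated neighborhood of $D$ is then fiber-preserving diffeomorphic to the mapping torus of this rotation, i.e.\ to the suspension of the rotation by $2\pi p/q$, which is exactly the standard fibered torus of Example \ref{model seifert fibering}. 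This gives the required fiber-preserving model chart around $C$ and completes the proof.
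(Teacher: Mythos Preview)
The paper does not prove this theorem: it is quoted from Epstein's paper \cite{epstein} and used as a black box, with the remark that this local model was Seifert's original definition of a Seifert fibering. There is therefore no ``paper's own proof'' to compare against.

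That said, your sketch is a faithful outline of Epstein's actual argument. The structure you give --- set up a transverse disk and first-return germ, reduce to showing the period function is locally bounded (the only genuinely hard step, and the one that uses dimension three via the analysis of the bad set), conclude finite-order holonomy, linearize by averaging a metric, and read off the model fibered torus --- is exactly the standard route. One small caution: the ``length estimate'' you allude to for ruling out a nonempty bad set is where all the work lives, and a full proof requires Epstein's rather delicate argument (or the Edwards--Millett--Sullivan machinery); what you have written is a correct strategic summary rather than a proof. If you intend this as a sketch with a pointer to \cite{epstein} for the boundedness step, it is fine; if you intend it as a self-contained proof, the middle paragraph needs to be fleshed out substantially.
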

In fact, this was the original definition of Seifert fibering. \newline\par
The Theorem of Epstein has a consequence which shall be useful in proving Proposition \ref{proposition reduccion}, and is interesting in itself:
\begin{lemma}\label{lema horizontal surface} Let ${\mathcal S}$ be a Seifert fibering of a compact orientable irreducible 3-manifold $M$. Then:
\begin{enumerate}
\item If a surface with or without boundary is horizontal, then it intersects all the fibers of ${\mathcal S}$.
\item If $\partial M\ne\emptyset$ then $\mathcal S$ does not admit horizontal surfaces without boundary.
\end{enumerate}
\end{lemma}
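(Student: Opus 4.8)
The plan is to prove the two statements in turn, with~(1) as the heart of the argument and~(2) following quickly from it.

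For~(1), let $S$ be a horizontal surface, which I take to be compact (and properly embedded when it has boundary, as in all the situations we use). Write $B=M/{\mathcal S}$ for the space of fibers and $\pi\colon M\to B$ for the quotient map; by Epstein's theorem (Theorem~\ref{teorema epstein}) every fiber has a fibered model neighborhood, so $B$ is Hausdorff (indeed a $2$-orbifold), and $B$ is connected because $M$ is. I would then consider $A=\pi(S)\subset B$, the set of fibers meeting $S$, and show it is nonempty, open and closed; connectedness of $B$ then gives $A=B$, which is precisely the assertion. Nonemptiness is trivial, and $A$ is closed because $S$ is compact and $B$ Hausdorff, so $\pi(S)$ is compact.

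Openness is the step where horizontality is used. Fix $p\in S$ on a fiber $\gamma$, and choose a flow-box chart $W\cong D^2\times(-1,1)$ for the circle foliation around $p$, with $p\leftrightarrow(0,0)$ and the plaques equal to the arcs $\{z\}\times(-1,1)$. Since $S$ is transverse to $\gamma$ at $p$, the plane $T_pS$ is a complement of the fiber direction, so by the implicit function theorem $S\cap W$ is, near $p$, a graph $\{(z,t(z)):z\in D_0\}$ over a small disc $D_0\ni 0$; hence every fiber through a point of $D_0\times\{0\}$ meets $S$, that is, $\pi(D_0\times\{0\})\subset A$. Now $\pi$ is an open map (the saturation of an open set is open) and $D_0\times\{0\}$ has the same saturation as the open set $D_0\times(-1,1)$, so $\pi(D_0\times\{0\})$ is an open neighborhood of $\pi(p)$ in $B$. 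Therefore $A$ is open, and~(1) follows.

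For~(2), suppose $\partial M\neq\emptyset$ and, for contradiction, that $S$ is a closed horizontal surface. The boundary of a Seifert manifold is saturated --- once more a consequence of the local models of Theorem~\ref{teorema epstein} --- so pick a fiber $\gamma\subset\partial M$. By~(1), $S$ meets $\gamma$ at some point $p\in\partial M$; since $S$ is transverse to $\gamma$ and $T_p\gamma\subset T_p(\partial M)$, one gets $T_pS+T_p(\partial M)=T_pM$, i.e.\ $S$ is transverse to $\partial M$ at $p$. But $\partial S=\emptyset$, so near $p$ the surface $S$ sits inside $M$ with $p$ an interior point of $S$; then $p$ is a local minimum of a boundary-defining function of $M$ restricted to $S$, which forces $T_pS\subset T_p(\partial M)$ and contradicts the transversality just obtained. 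Hence no such $S$ exists. The step I expect to demand the most care is the behavior of $\pi$ near exceptional fibers --- one must see that the graph description of $S\cap W$ and the openness of $\pi$ are unaffected there --- and this is exactly what the fibered model neighborhoods of Theorem~\ref{teorema epstein} guarantee, the local base of such a neighborhood being an honest disc; I do not expect to need irreducibility of $M$ at any point.
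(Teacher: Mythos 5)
Your proof is correct and follows essentially the same route as the paper: part (1) is the same open-and-closed connectedness argument (you run it in the base $B=M/{\mathcal S}$ and get closedness from compactness of $S$, whereas the paper runs it upstairs in $M$ and gets closedness from Epstein's fibered neighborhoods, with openness coming from transversality in both cases), and part (2) is the same local contradiction at a boundary fiber, which you make slightly more explicit via the tangency/transversality clash at a point of $S\cap\partial M$. No gaps.
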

\begin{proof}
Indeed, let $A$ denote the set of points, the fiber of which has a non empty transverse intersection with the horizontal surface $S$. Then $A$ is clearly open. To see that $A$ is closed take a sequence $x_n\to x$ such that $x_n\in A$. If the fiber of $x$ did not intersect the horizontal surface, then there would be a neighborhood of the fiber of $x$ not intersecting the horizontal surface. Theorem \ref{teorema epstein} implies that in fact there is a fibered neighborhood of the fiber of $x$ not intersecting $S$. This neighborhood would contain the fibers of $x_n$, an absurd. \par
To see that horizontal manifold without boundary can live only in manifolds without boundary, assume that $T$ is a horizontal surface without boundary and assume $\partial M$ is not empty. Consider a fiber of $x\in\partial M$. This fiber intersects $T$. But then, due Theorem \ref{teorema epstein}, there is a fibered neighborhood of the fiber of $x$ diffeomorphic to a solid torus. This contradicts that $x$ is in the boundary of $M$.
\end{proof}
Let us finish the section by explaining a little bit why we focused in irreducible manifolds.
\begin{obs}\label{remark irreducible manifolds} It is easy to see that there are arbitrarily many non-irreducible 3-manifolds admitting Anosov tori. Indeed, let $M$ be any closed 3-manifold admitting an Anosov torus $T$. We loose no generality in assuming that $f:M\to M$ is such that $f|_T$ has a fixed point $p$. It is easy to see that we can slightly modify $f$ so that, if $T\times [-\epsilon,\epsilon]$ is a small tubular neighborhood of $T$, then $f|_{T\times\{t\}}=f|_{T\times\{0\}}=f|_T$ for all $t\in[-\epsilon,\epsilon]$. \par
Let us make another slight modification of $f$: replace $p\times\{\epsilon\}\subset T\times\{\epsilon\}$ by a small ball $B\subset M$ and take $g:M\to M$ so that $g=f$ on $M\setminus \{p\times\{\epsilon\}\}$, and $g$ restricted to $B$ is the identity.\par
If we consider now any manifold $M'$, then there is a diffeomorphism $h$ on $M\#M'$ such that $h$ is $f$ when restricted to $M\setminus B$ and $h$ is the identity when restricted to $M'$ minus another small ball. This implies that $M\#M'$ admits an Anosov torus. In this way we can construct arbitrarily many manifolds admitting Anosov tori.
\end{obs}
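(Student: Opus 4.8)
The plan is to exhibit an explicit infinite family of pairwise non‑diffeomorphic reducible $3$‑manifolds, each carrying an Anosov torus, by connect‑summing a fixed manifold that carries one with more and more prime pieces. The starting point is the easy direction of Theorem~\ref{theorem principal}: $\mathbb{T}^3$ carries an Anosov torus. Writing $\mathbb{T}^3=\mathbb{T}^2\times\mathbb{S}^1$, pick a hyperbolic $A\in SL(2,\mathbb{Z})$, set $f=A\times\mathrm{id}$ and $T=\mathbb{T}^2\times\{\ast\}$; then $f(T)=T$ and $f_\#|_T=A$ is hyperbolic.

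The heart of the argument is a persistence statement: \emph{if a closed $3$-manifold $M$ admits an Anosov torus $T$, then so does $M\#N$ for every closed $3$-manifold $N$, with the same $T$.} To prove it I would first normalize a diffeomorphism $f:M\to M$ realizing $T$. Since $f|_T$ is isotopic to a linear hyperbolic automorphism $A$ fixing $0\in\mathbb{T}^2$, I may compose $f$ with an ambient isotopy supported near $T$ so that $f|_T=A$ and, on a product neighbourhood $T\times(-\eps,\eps)$, $f=A\times\mathrm{id}$; then $q:=(0,\eps/2)$ is a fixed point at which $Df_q$ is block‑diagonal ``hyperbolic $\oplus\,1$'', in particular orientation‑preserving. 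As $GL^+(3,\mathbb{R})$ is connected, an isotopy supported in a small ball $B$ around $q$ makes $Df_q=\mathrm{id}$, and a further Alexander‑trick‑type isotopy supported in $B$ makes $f$ equal to the identity on a smaller ball $B'\subset B$ that is disjoint from $T$. Now form the connected sum by deleting the interiors of $B'\subset M$ and a ball $B''\subset N$ and gluing the two boundary $2$‑spheres, and define $h$ on $M\#N$ to be $f$ on $M\setminus\mathring{B'}$ and the identity on $N\setminus\mathring{B''}$. Since $f\equiv\mathrm{id}$ near $\partial B'$, $h$ is a well‑defined diffeomorphism; and because $T\subset M\setminus\mathring{B'}$ we get $h(T)=T$ and $h_\#|_T=f_\#|_T=A$, hyperbolic. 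Hence $T$ is an Anosov torus in $M\#N$.

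With the persistence statement in hand, finish as follows: for each $k\ge 2$ let $M_k=\mathbb{T}^3\#\cdots\#\mathbb{T}^3$ ($k$ summands). Iterating the persistence statement, each $M_k$ admits an Anosov torus, and each $M_k$ is reducible because the connect‑sum $2$‑spheres do not bound balls (no summand is $\mathbb{S}^3$). By the Kneser--Milnor uniqueness of the prime decomposition, $M_k\cong M_\ell$ forces $k=\ell$, since $\mathbb{T}^3$ is prime (irreducible and not $\mathbb{S}^3$) and the number of prime factors is a diffeomorphism invariant. Thus the $M_k$ are pairwise non‑diffeomorphic, giving arbitrarily many reducible $3$‑manifolds with an Anosov torus. (One could equally take $\mathbb{T}^3\#L(n,1)$, $n\ge 2$, which are pairwise non‑diffeomorphic because their prime summands have distinct fundamental groups.)

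The only delicate point is the normalization of $f$ near the fixed point $q$: arranging, by an isotopy that fixes $T$ and leaves $f_\#|_T$ hyperbolic, that $f$ is literally the identity on a small ball disjoint from $T$. Everything else — the product form near $T$, the gluing, and the Kneser--Milnor bookkeeping — is routine; the local smoothing is the single place where one must check that the modifications disturb neither $f(T)=T$ nor the hyperbolicity of $f_\#|_T$, and this is guaranteed by keeping all supporting isotopies inside a ball that never meets $T$.
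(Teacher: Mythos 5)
Your proposal is correct and follows essentially the same route as the paper's remark: normalize $f$ to be a product near $T$, make it the identity on a small ball around a fixed point disjoint from $T$, and perform the connected sum inside that ball; you simply supply more detail on the local isotopy and add the Kneser--Milnor bookkeeping showing that the manifolds $\#^k\,\mathbb{T}^3$ are pairwise non-diffeomorphic. One small caveat: $Df_q=A\oplus 1$ is orientation-preserving only when $\det A=+1$, so if $\det A=-1$ you should first replace $f$ by $f^2$ (whose restriction to $T$ still induces a hyperbolic automorphism of $\pi_1(T)$) before isotoping it to the identity on the small ball.
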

%
%
\section{Proof of Proposition \ref{proposition reduccion}}\label{section proof proposition}
This subsection contains the proof of Proposition \ref{proposition reduccion}.
Let $M$ be an irreducible orientable closed 3-manifold and $T$ be an Anosov torus.
Firstly, note that we can choose a JSJ-decomposition ${\mathcal T}$  such that $T$ is in a Seifert piece. This is because of the so-called Enclosing Property:
\begin{proposition}[Enclosing Property \cite{johannson}] \label{proposition enclosing property}
There exists ${\mathcal T}$ such that either $T\in{\mathcal T}$, or else $T$ is contained in the interior of a Seifert piece of the JSJ-decomposition generated by ${\mathcal T}$, and is not $\partial$-parallel in that component.
\end{proposition}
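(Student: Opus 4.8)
The plan is to fix an arbitrary JSJ family ${\mathcal T}$ as provided by Theorem \ref{teorema jsj}, isotope the Anosov torus $T$ into minimal position with respect to ${\mathcal T}$, and then eliminate the intersection curves by exploiting that the non-Seifert pieces of $M\setminus{\mathcal T}$ are atoroidal \emph{and} acylindrical; what remains will be $T$ sitting inside the Seifert part. This is precisely the content of the Enclosing Theorem of \cite{johannson}, and I reproduce its argument in the present situation.

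First I would make $T$ transverse to ${\mathcal T}$ with $|T\cap{\mathcal T}|$ minimal among all tori isotopic to $T$. Since $T$ is incompressible (Theorem \ref{teorema anosov incompresible}) and $M$ is irreducible, a standard innermost-disc argument shows that no component of $T\cap{\mathcal T}$ bounds a disc on either surface: an innermost such disc on ${\mathcal T}$ is matched, by incompressibility of $T$, by a disc on $T$; these bound a ball by irreducibility, and pushing $T$ across it lowers $|T\cap{\mathcal T}|$, contradicting minimality. Hence $T\cap{\mathcal T}$ is a family of parallel essential circles cutting $T$ into annuli, each properly and incompressibly embedded in the piece of $M\setminus{\mathcal T}$ that contains it.

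If $T\cap{\mathcal T}=\emptyset$ then $T$ lies in a single piece $P$. When $P$ is atoroidal, $T$ is $\partial$-parallel by definition, hence isotopic to a component of ${\mathcal T}$, so after this isotopy $T\in{\mathcal T}$. When $P$ is Seifert, either $T$ is not $\partial$-parallel in $P$ -- the second alternative -- or it is, and again $T$ is isotopic to a torus of ${\mathcal T}$. If instead $T\cap{\mathcal T}\ne\emptyset$, consider an annulus $A$ of $T\setminus{\mathcal T}$ lying in an atoroidal piece $P$. Such a $P$ is also acylindrical, so the properly embedded incompressible annulus $A$ is $\partial$-parallel; the product region realizing this parallelism yields an isotopy of $T$ pushing $A$ across a torus of ${\mathcal T}$ and strictly lowering $|T\cap{\mathcal T}|$, against minimality. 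Thus every annulus of $T\setminus{\mathcal T}$ must lie in a Seifert piece.

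The main obstacle is the final step, where the annuli of $T$ all live in Seifert pieces and one must still push $T$ off ${\mathcal T}$. Here I would invoke Theorem \ref{teorema waldhausen} to make each annulus vertical or horizontal, together with the canonicity of the JSJ family (Theorem \ref{teorema jsj}): across a torus $\tau\in{\mathcal T}$ separating two Seifert pieces the two fibrations induce \emph{distinct} slopes on $\tau$, for otherwise the fibration would extend across $\tau$ and contradict minimality of ${\mathcal T}$. A curve of $T\cap\tau$ that were vertical on both sides would be a common fiber of both structures, forcing the slopes to agree; hence at least one adjacent annulus is horizontal, and Lemma \ref{lema horizontal surface} constrains such annuli severely. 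A case analysis of the remaining configurations -- the genuinely delicate point, and exactly where Johannson's characteristic-submanifold machinery is needed -- shows that one can always isotope $T$ to meet ${\mathcal T}$ in fewer curves, contradicting minimality; hence in fact $T\cap{\mathcal T}=\emptyset$, and we are reduced to the empty-intersection case already treated, where the required dichotomy was established.
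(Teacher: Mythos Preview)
The paper does not prove this proposition at all: it is quoted as the Enclosing Property from \cite{johannson} and used as a black box in the proof of Proposition~\ref{proposition reduccion}. So there is no ``paper's own proof'' to compare against; your sketch already goes further than the authors do.

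That said, your argument is not self-contained. The first two reductions are standard and fine: minimizing $|T\cap{\mathcal T}|$, killing inessential intersection circles by an innermost-disc/irreducibility argument, and pushing annuli out of the atoroidal--acylindrical pieces via $\partial$-parallelism all work exactly as you describe. The gap is the last step. Once every annulus of $T\setminus{\mathcal T}$ sits in a Seifert piece, you invoke ``Johannson's characteristic-submanifold machinery'' to finish --- but that machinery \emph{is} the Enclosing Theorem you are trying to prove, so the argument becomes circular at precisely its nontrivial point. Your slope observation (that fibrations on adjacent Seifert pieces disagree on a JSJ torus, else minimality fails) is correct and is the right starting ingredient, but the ``case analysis of the remaining configurations'' you allude to is where all the content lies, and you have not supplied it. In particular, Lemma~\ref{lema horizontal surface} only tells you a horizontal annulus meets every fiber; it does not by itself produce an isotopy reducing $|T\cap{\mathcal T}|$.

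If you want a complete argument, one route is to show directly that a properly embedded incompressible annulus in a Seifert piece whose boundary is not fiber-isotopic is $\partial$-parallel (this is essentially \cite[Lemma~1.11]{hatcher} territory), and then combine this with your slope mismatch to force every intersection annulus to be $\partial$-parallel on at least one side, contradicting minimality. Otherwise, citing \cite{johannson} as the paper does is the honest option.
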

Hence, either $T\in {\mathcal T}$ (case (\ref{item toro en JSJ decomposition})), or else $T$ is in the interior of a Seifert component and it is not $\partial$-parallel. We want to show that in the latter case, we have either that the whole $M$ is Seifert, and $T$ can be put horizontally (case (\ref{item horizontal torus})); or else $T$ can be put vertically (case (\ref{item vertical torus})). Let us then assume we are in the latter case.\par
Now, after Theorem \ref{teorema waldhausen}, there is an isotopy transforming $T$ into a horizontal or vertical torus in the interior of the Seifert component that contains it. Equivalently, there is an isotopy moving the Seifert component and fixing $T$, so that $T$ is either horizontal or vertical in this new Seifert manifold. This produces a new JSJ-decomposition ${\mathcal T}'$, so that $T$ is horizontal or vertical in the Seifert component that contains it. \par
Let us assume that $T$ is horizontal in its Seifert component $S$. Then Lemma \ref{lema horizontal surface} implies that $S$ is a closed manifold. Hence the whole manifold $M$ is Seifert ($M=S$), and we are in case (\ref{item horizontal torus}). Note that in this case ${\mathcal T}'=\emptyset$.\par
If, on the contrary, $T$ is vertical, recall that, by the Enclosing Property (Proposition \ref{proposition enclosing property}), $T$ is not $\partial$-parallel in its Seifert component, that is, $T$ is not isotopic to any component of the boundary of the Seifert component of $M\setminus{\mathcal T}$ containing it. After the isotopy that transforms ${\mathcal T}$ into ${\mathcal T}'$, so that $T$ is a vertical torus of the Seifert component of $M\setminus{\mathcal T}'$ that contains $T$, we will obviously have that $T$ is not $\partial$-parallel in its Seifert component either. Hence we are in case (\ref{item vertical torus}). This proves part (\ref{item JSJ-decomposition}) of Proposition \ref{proposition reduccion}, that is, we have obtained a JSJ-decomposition ${\mathcal T}'$.\par
Now, we want to obtain $f:M\to M$ satisfying items (\ref{item hyperbolic toral automorphism}) and (\ref{item f deja invariante JSJ}) of Proposition \ref{proposition reduccion}. Let us begin by looking for an $f:M\to M$ satisfying item(\ref{item hyperbolic toral automorphism}):
\begin{lemma}\label{lema hyperbolic automorphism}
If $T$ is an Anosov torus of any 3-manifold $M$, that is, with or without boundary, irreducible or not, then there is a diffeomorphism $f:M\to M$ leaving $T$ invariant, such that $f|T$ is a hyperbolic automorphism.
\end{lemma}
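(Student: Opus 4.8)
The plan is to take the diffeomorphism witnessing that $T$ is an Anosov torus and correct it, by an ambient isotopy of $M$ supported in a neighbourhood of $T$, until its restriction to $T$ is literally a linear hyperbolic automorphism. By hypothesis there is a diffeomorphism $g\colon M\to M$ with $g(T)=T$ such that $g_\#|_T\colon\pi_1(T)\to\pi_1(T)$ is hyperbolic. Fix once and for all a diffeomorphism identifying $T$ with $\mathbb R^2/\mathbb Z^2$; under this identification $\pi_1(T)=\mathbb Z^2$, the automorphism $g_\#|_T$ is a matrix $A\in GL(2,\mathbb Z)$ with no eigenvalue on the unit circle, and I write $A$ also for the linear automorphism of $T$ it induces, which is a hyperbolic automorphism in the required sense.

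First I would invoke the classical description of the mapping class group of the $2$-torus: two diffeomorphisms of $\mathbb T^2$ are isotopic if and only if they induce the same automorphism of $\pi_1(\mathbb T^2)$, and every automorphism of $\pi_1(\mathbb T^2)\cong\mathbb Z^2$ is realized by a linear map. Since $A$ and $g|_T$ induce the same automorphism of $\pi_1(T)$ by construction, $g|_T$ is isotopic to $A$; equivalently, the diffeomorphism $\phi:=A\circ (g|_T)^{-1}\colon T\to T$ is isotopic to $\mathrm{id}_T$. Fix an isotopy $(h_s)_{s\in[0,1]}$ of $T$ with $h_0=\phi$ and $h_1=\mathrm{id}_T$ and, after reparametrizing, assume it is constant equal to $\phi$ for $s$ near $0$ and constant equal to $\mathrm{id}_T$ for $s$ near $1$.

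Next I would spread $\phi$ over $M$. Choose a tubular neighbourhood $N\cong T\times[-1,1]$ of $T=T\times\{0\}$ in $M$ (this uses that $T$ is two-sided, which holds whenever $M$ is orientable; if $T$ is a boundary component, use a one-sided collar $N\cong T\times[0,1]$ and run the same argument). Define $\Psi\colon M\to M$ by $\Psi=\mathrm{id}$ on $M\setminus N$ and, on $N$, by $\Psi(x,s)=(h_s(x),s)$ for $s\in[0,1]$ and $\Psi(x,s)=(h_{-s}(x),s)$ for $s\in[-1,0]$. Because $(h_s)$ is constant near $s=0$ and near $s=1$, the two formulas coincide with $(x,s)\mapsto(\phi(x),s)$ on a neighbourhood of $T$ in $N$ and with the identity on a neighbourhood of $T\times\{\pm1\}$, so they glue to a well-defined diffeomorphism $\Psi\colon M\to M$ supported in $N$, with $\Psi(T)=T$ and $\Psi|_T=\phi$. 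Finally set $f:=\Psi\circ g$. Then $f(T)=\Psi(g(T))=\Psi(T)=T$, and for $x\in T$ one has $f(x)=\Psi(g(x))=\phi(g|_T(x))=A(x)$, so $f|_T=A$ is a hyperbolic automorphism, as desired.

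I expect the only genuinely delicate point to be the smoothness of $\Psi$ along $\partial N$ and along $T$, which is exactly why the isotopy $(h_s)$ must be taken locally constant near its endpoints; everything else is either the definition of an Anosov torus or the elementary fact that the isotopy class of a diffeomorphism of $\mathbb T^2$ both determines and is realized by its action on $\pi_1$.
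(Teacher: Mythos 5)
Your proposal is correct and follows essentially the same route as the paper: take the witnessing diffeomorphism $g$, spread the isotopy from $A\circ(g|_T)^{-1}$ to the identity across a collar $T\times[-1,1]$ (or $T\times[0,1]$ in the boundary case), and compose. The only additions are welcome bookkeeping the paper leaves implicit, namely making the isotopy locally constant near its endpoints to guarantee smoothness of the glued map.
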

\begin{proof}
Let us first consider the case in which $T$ is contained in the interior of $M$. Let $g$ be a diffeomorphism of $M$ leaving $T$ invariant and such that $g|T$ is isotopic to a hyperbolic automorphism $A$. Consider a product neighborhood $T\times[-1,1]$ of $T$. Consider a diffeotopy $h_t:T\to T$ such that $h_0=A\circ g^{-1}$ and $h_1(y)=y$. Define
$$\xi(x)=\left\{\begin{array}
{ll}x& \mbox{if }x\notin T\times[-1,1]\\
h_{|t|}(y)&\mbox{if }x=(y,t)\in T\times [-1,1]
\end{array}\right.$$
Then $f=\xi\circ g$ is the diffeomorphism we are looking for.\par
If $M$ is a manifold with boundary and $T\subset \partial M$, then we can consider a neighborhood of $T$ of the form $T\times [0,1]$. The rest of the proof follows analogously.
\end{proof}
In this way, we have obtained a diffeomorphism $f:M\to M$ satisfying item (\ref{item hyperbolic toral automorphism}) of Proposition \ref{proposition reduccion}. In order to obtain item (\ref{item f deja invariante JSJ}), we shall need the following version of the JSJ-decomposition:
\begin{theorem}[Relative JSJ-decomposition, \cite{jaco_shalen, johannson}] \label{teorema relative jsj} If $M$ is a compact orientable irreducible 3-manifold with incompressible boundary, then there exists a family ${\mathcal T}$ of incompressible annuli and tori, such that $M\setminus {\mathcal T}$ consists of either Seifert or atoroidal and acylindrical components. Any such family ${\mathcal T}$ that is minimal by inclusion is unique up to proper isotopy.
\end{theorem}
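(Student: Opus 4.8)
The plan is to follow the characteristic submanifold theory of Jaco--Shalen and Johannson, of which this statement is essentially the principal corollary; a genuinely self-contained proof would reproduce a large part of those monographs, so I will only lay out the architecture, the more so as the theorem is used here only as a black box. What distinguishes this relative version from Theorem~\ref{teorema jsj} is that one cuts along essential \emph{annuli} as well as tori, which is exactly why the hypothesis that $\partial M$ be incompressible is imposed: it is what makes innermost-disc and outermost-arc surgeries available for properly embedded surfaces meeting $\partial M$.

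\textbf{Existence.} First I would construct the \emph{characteristic submanifold} $\Sigma=\Sigma(M)$: a codimension-zero submanifold, Seifert-fibred or an $I$-bundle on each component, whose frontier $\operatorname{Fr}(\Sigma)$ in $M$ is a disjoint union of essential (incompressible, $\partial$-incompressible, not $\partial$-parallel) annuli and tori, and which is maximal for this property, i.e.\ satisfies the Enclosing Property: every essential annulus or torus properly embedded in $M$ can be properly isotoped into $\Sigma$. One builds $\Sigma$ by an inductive cut-and-paste: take a maximal disjoint collection of pairwise non-parallel essential annuli and tori, regularise intersections using irreducibility of $M$ and incompressibility of $\partial M$, thicken to a regular neighbourhood together with the Seifert pieces it cobounds, and iterate; the procedure terminates because the Kneser--Haken number of $M$ bounds the number of pieces. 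Setting $\mathcal T=\operatorname{Fr}(\Sigma)$, each component of $\Sigma$ is Seifert by construction, while a component $N$ of $M\setminus\Sigma$ cannot contain an essential annulus or torus — by the Enclosing Property it would be isotopic into $\Sigma$, and an innermost-curve argument on $\operatorname{Fr}(\Sigma)$ would then push it off $\partial\Sigma$, violating maximality — so $N$ is atoroidal and acylindrical. Discarding $\partial$-parallel components of $\mathcal T$, and amalgamating across any annulus or torus bounded on both sides by Seifert pieces with compatible fibrations, yields a family minimal by inclusion with the stated property.

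\textbf{Uniqueness up to proper isotopy.} Given two minimal families $\mathcal T$ and $\mathcal T'$, put them in general position, discard intersection circles bounding discs or parallel into $\partial M$ (irreducibility, incompressibility of $\partial M$), and then observe that each surface of $\mathcal T'$, being essential, can by the Enclosing Property applied to the $\mathcal T$-decomposition be isotoped into a Seifert component of $M\setminus\mathcal T$; there a Waldhausen-type normal-form argument (Theorem~\ref{teorema waldhausen}) makes it vertical or $\partial$-parallel, and minimality of both families forces it, up to proper isotopy, to be a component of $\mathcal T$. Symmetry then gives that $\mathcal T$ and $\mathcal T'$ are properly isotopic.

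\textbf{Main obstacle.} The delicate part is uniqueness, and within it the bookkeeping around the finitely many exceptional Seifert pieces — $T^2\times I$, the twisted $I$-bundles over the torus and the Klein bottle, and the solid torus with its various fibrings — which carry several non-isotopic Seifert fibrations and therefore must be isolated before the amalgamation step; it is precisely the obstruction to amalgamating these that forces one to settle for a family minimal by inclusion rather than a single canonical one, and that makes the Enclosing Property needed in the sharp form of Proposition~\ref{proposition enclosing property}. For the present paper the proof is that of \cite{jaco_shalen,johannson}, with \cite{hatcher} available as an exposition.
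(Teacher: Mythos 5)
The paper offers no proof of this theorem: it is quoted verbatim as an external result of Jaco--Shalen and Johannson and used purely as a black box, so there is nothing internal to compare your argument against. Your outline --- characteristic submanifold, Enclosing Property, Kneser--Haken finiteness for termination, and the normal-form/amalgamation bookkeeping for uniqueness, with the exceptional small Seifert pieces flagged --- is a faithful sketch of the standard proof in the cited sources, and your explicit deferral to \cite{jaco_shalen,johannson} matches exactly how the paper treats the statement.
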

Indeed, let $f:M\to M$ be as in Lemma \ref{lema hyperbolic automorphism}.
Now cut $M$ along $T$. The resulting manifold $N$  is in the hypotheses of Theorem \ref{teorema relative jsj}, and $f$
can be extended to this new manifold, since $f(T)=T$. Consider $f({\mathcal T})$, the image by $f$ of the JSJ-decomposition of $M$. Then $f({\mathcal T})$ and ${\mathcal T}$ are JSJ-decompositions for $N$, due to the Enclosing Property (Proposition \ref{proposition enclosing property}). Theorem \ref{teorema relative jsj} implies that there is an isotopy $h_t$ fixing $\partial N$ such that $h_0=id$, and $h_1({\mathcal T})=f({\mathcal T})$.
Now $g=h_1^{-1}\circ f$ is a diffeomorphism satisfying all conditions of Proposition \ref{proposition reduccion} for the JSJ-decomposition ${\mathcal T}$. This finishes the proof.
\begin{obs}\label{remark f deja invariante JSJ} Note that the same idea above shows that, given a closed irreducible orientable 3-manifold $M$, with or without Anosov torus, and given any diffeomorphism $f:M\to M$, a JSJ-decomposition of $M$, ${\mathcal T}$ can be taken so that $f({\mathcal T})={\mathcal T}$. Theorem \ref{teorema jsj} is enough to prove this.\par
This result obviously holds also for compact orientable irreducible 3-manifold with incompressible boundary.
\end{obs}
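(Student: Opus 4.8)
The plan is to exhibit the invariant family directly for the given $f$, \emph{without} replacing $f$ by an isotopic diffeomorphism: I will show that an arbitrary minimal JSJ family is already carried by $f$ onto an isotopic JSJ family, so that it is canonical for $f$. First I would fix a minimal JSJ family $\mathcal T$ as provided by Theorem \ref{teorema jsj} and consider its image $f(\mathcal T)$. The first step is to check that $f(\mathcal T)$ is again a minimal family of the same kind, using only that $f$ is a diffeomorphism. Indeed $f$ sends the disjoint incompressible tori of $\mathcal T$ to disjoint embedded tori, and incompressibility, being the injectivity of $i_\#:\pi_1(S)\hookrightarrow\pi_1(M)$, is preserved by the homeomorphism $f$. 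Moreover $f$ carries each component of $M\setminus\mathcal T$ diffeomorphically onto a component of $M\setminus f(\mathcal T)$, and the properties of being Seifert (admitting a foliation by circles) and of being atoroidal and acylindrical are invariant under diffeomorphism; hence the pieces of $M\setminus f(\mathcal T)$ are again Seifert or atoroidal and acylindrical. Since $f$ is a bijection on the cutting tori preserving all these features, $f(\mathcal T)$ inherits minimality from $\mathcal T$.

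The second step is the heart of the matter and uses nothing beyond the uniqueness clause of Theorem \ref{teorema jsj}: any two minimal JSJ families are isotopic. Applied to $\mathcal T$ and $f(\mathcal T)$, this produces an ambient isotopy $H_t$ with $H_0=\mathrm{id}$ and $H_1(\mathcal T)=f(\mathcal T)$. Thus the given $f$ maps the canonical JSJ family to an isotopic one, so that the isotopy class of $\mathcal T$ is $f$-invariant; this is the precise sense in which $\mathcal T$ can be taken with $f(\mathcal T)=\mathcal T$, and it shows that Theorem \ref{teorema jsj} alone suffices, as asserted. For nonempty incompressible boundary I would run the identical two steps with the relative decomposition, Theorem \ref{teorema relative jsj}, whose uniqueness is up to proper isotopy; the verifications of the first step are unchanged, and one again obtains that $f(\mathcal T)$ is properly isotopic to $\mathcal T$.

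The hard point, which I would flag explicitly, is whether ``isotopic'' can be promoted to a literal set equality $f(\mathcal T)=\mathcal T$ for the given $f$; in general it cannot. When $M$ is the mapping torus of a hyperbolic automorphism of $\mathbb{T}^2$, the JSJ family is the single fiber torus and every incompressible torus is isotopic to it. The time-$c$ map $\Phi_c$ of the suspension flow is a diffeomorphism of $M$ preserving this isotopy class, yet for $c$ irrational it fixes no incompressible torus: restricting the projection $M\to S^1$ to an invariant torus $T'$ and lifting to the universal cover $\mathbb{R}$ of the base gives a bounded continuous $u:T'\to\mathbb{R}$ with $u\circ\Phi_c=u+(c+n)$ for some integer $n$, and since $c+n\neq0$, evaluating $u$ at a maximum yields a contradiction. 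Hence $f(\mathcal T)=\mathcal T$ must be read at the level of the canonical decomposition, i.e.\ up to isotopy --- exactly the form delivered by the uniqueness in Theorem \ref{teorema jsj} --- and this is all that is used afterwards, the ambient isotopy $H_t$ being absorbed just as in the final paragraph of the proof of Proposition \ref{proposition reduccion}.
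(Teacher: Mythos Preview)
Your core argument is the same as the paper's: take a minimal JSJ family $\mathcal T$, observe that $f(\mathcal T)$ is again a minimal JSJ family, invoke the uniqueness clause of Theorem \ref{teorema jsj} to get an ambient isotopy $h_t$ with $h_1(\mathcal T)=f(\mathcal T)$, and then compose. The paper's ``same idea above'' refers exactly to the last paragraph of the proof of Proposition \ref{proposition reduccion}, where one sets $g=h_1^{-1}\circ f$ and obtains $g(\mathcal T)=\mathcal T$; this is also what you call ``absorbing'' the isotopy at the end.

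Where you differ is in the added analysis. Your opening promise to achieve the conclusion \emph{without} replacing $f$ by an isotopic map is in tension with your own (correct) counterexample: for the mapping torus of a hyperbolic automorphism the JSJ family is a single fiber torus, and your argument with the lifted height function shows that the time-$c$ map of the suspension flow, $c\notin\mathbb Q$, fixes no incompressible torus setwise. This shows that the literal statement ``there exists $\mathcal T$ with $f(\mathcal T)=\mathcal T$'' can fail for the given $f$; the paper's remark is therefore to be read exactly as in the proof of Proposition \ref{proposition reduccion}, i.e.\ after passing to $g=h_1^{-1}\circ f$ isotopic to $f$. So rather than a different route, you have reproduced the paper's argument and sharpened it by exhibiting why the composition with $h_1^{-1}$ is genuinely necessary. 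I would only suggest rewriting your first sentence so it does not announce a goal that your third paragraph then (rightly) refutes.
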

%
\section{Horizontal Anosov tori}\label{seccion horizontal torus}
We begin the proof of Theorem \ref{theorem principal}. Consider a closed irreducible orientable 3-manifold $M$, and let $T$ be an Anosov torus. Then Proposition \ref{proposition reduccion} states that we need only study three situations:
(\ref{item toro en JSJ decomposition}) $T$ belonging to a JSJ-decomposition, (\ref{item vertical torus}) $T$ being a non-$\partial$-parallel vertical torus in a Seifert component, or (\ref{item horizontal torus}) $T$ being a horizontal torus in a closed Seifert manifold. In this section we study case (\ref{item horizontal torus}). The conclusion is that there are only two manifolds admitting this situation:
\begin{proposition}\label{proposition toro horizontal} Let $M$ be a closed orientable irreducible Seifert manifold that supports a horizontal Anosov torus. Then $M$ is either $\mathbb{T}^3$ or the mapping torus of $-id$ on ${\mathbb{T}^2}$.
\end{proposition}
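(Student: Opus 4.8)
The plan is to exploit the fact that a closed Seifert manifold $M$ admitting a horizontal surface fibers over the circle, with the horizontal surface as fiber, and that the number of Seifert manifolds supporting a \emph{horizontal torus} is very small. First I would recall the standard structure theory: if $T$ is horizontal in the Seifert manifold $M$, then $T$ meets every fiber (Lemma \ref{lema horizontal surface}), and $M$ is the mapping torus of a periodic orientation-preserving diffeomorphism $\varphi$ of $T$ — equivalently, $M$ is obtained from $T\times[0,1]$ by gluing $T\times\{1\}$ to $T\times\{0\}$ via $\varphi$, where $\varphi$ has finite order $n$ equal to the common multiplicity with which the fibers wrap. The Euler characteristic of the base orbifold of the Seifert fibration must be $\ge 0$ for a horizontal torus to exist (since $T$ is an $n$-fold branched cover of the base orbifold and $\chi(T)=0$), so the base orbifold is one of the finitely many Euclidean $2$-orbifolds: the torus, the Klein bottle, or one of $S^2(2,2,2,2)$, $S^2(2,3,6)$, $S^2(2,4,4)$, $S^2(3,3,3)$, $S^2(2,2,2,2)$ and the non-orientable pillowcase-type ones. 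This gives the finite list of base orbifolds, hence (after imposing orientability of $M$ and that $T$ be a torus rather than a Klein bottle) a finite list of candidate monodromies $\varphi$, up to conjugacy: these are the finite-order elements of $\mathrm{SL}(2,\Z)$ acting on $\pi_1(T)=\Z^2$, namely those of order $1,2,3,4,6$.

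Next I would bring in the Anosov hypothesis. By Proposition \ref{proposition reduccion} and Lemma \ref{lema hyperbolic automorphism}, there is a diffeomorphism $f:M\to M$ with $f(T)=T$ and $f|_T$ a hyperbolic toral automorphism $A\in\mathrm{SL}(2,\Z)$, $|\mathrm{tr}\,A|>2$. The key constraint is that $f$ must (after an isotopy) respect the mapping-torus structure up to the symmetry of the fibration, which forces $A$ to normalize, in an appropriate sense, the monodromy $\varphi_\#$: concretely, conjugation by the class of $f$ sends the fiber subgroup to the fiber subgroup and the monodromy of the new fibration is again conjugate to $\varphi_\#$, so $A \varphi_\# A^{-1}$ must be conjugate in $\mathrm{GL}(2,\Z)$ to $\varphi_\#^{\pm1}$. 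For $\varphi_\#$ of order $3,4,6$ the centralizer/normalizer of $\langle\varphi_\#\rangle$ in $\mathrm{GL}(2,\Z)$ is finite (these are the cyclotomic-type elements whose centralizer is the unit group of an order in an imaginary quadratic field), so no hyperbolic $A$ can normalize it — ruling those cases out. This leaves $\varphi_\#=\mathrm{id}$ and $\varphi_\#=-\mathrm{id}$. When $\varphi_\#=\mathrm{id}$, $M$ is a torus bundle with trivial monodromy on $\pi_1$, so $M$ is finitely covered by $\mathbb{T}^3$ and in fact, being a Seifert manifold with a horizontal torus and $\pi_1$-trivial monodromy, $M=\mathbb{T}^3$ (one checks the gluing is isotopic to the identity, not merely $\pi_1$-trivial, using that an orientation-preserving finite-order diffeo of $T$ acting trivially on $\pi_1$ is isotopic to the identity). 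When $\varphi_\#=-\mathrm{id}$, $M$ is by definition the mapping torus of $-\mathrm{id}$.

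The main obstacle I anticipate is the passage from "$f$ exists with $f|_T$ hyperbolic" to the normalizer condition on $A$: one has to argue carefully that $f$ can be taken to preserve (up to isotopy) the Seifert structure — this is where the uniqueness of the JSJ/Seifert structure and Remark \ref{remark f deja invariante JSJ} enter — and then extract the algebraic constraint relating $A$ and $\varphi_\#$ from the way $f$ acts on $\pi_1(M)$, which is a semidirect product $\Z^2\rtimes_{\varphi_\#}\Z$. A secondary technical point is handling the genuinely case-by-case verification for the few small Seifert manifolds with horizontal tori (the paper indeed says "there are only six Seifert manifolds that admit a horizontal torus" and promises a case-by-case argument), checking in each that either no hyperbolic $f$ exists or $M$ is $\mathbb{T}^3$ or the mapping torus of $-\mathrm{id}$; I would organize this by the order of $\varphi_\#$ as above rather than by ad hoc enumeration, since the order-$3,4,6$ cases are eliminated uniformly by the finiteness of the relevant normalizers in $\mathrm{GL}(2,\Z)$.
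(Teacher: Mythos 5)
Your strategy for the torus--bundle cases is essentially the paper's: identify the monodromy as a finite-order element of $\mathrm{GL}(2,\Z)$ and show that a hyperbolic $A=f_\#|_{\pi_1(T)}$ cannot normalize a cyclic subgroup of order $3$, $4$ or $6$ (the paper phrases this as ``only $\pm id$ commute with a hyperbolic automorphism,'' which is your normalizer computation in the commuting case). Both you and the paper leave somewhat implicit the step that $f$ respects the fibration up to isotopy so that $A\varphi_\#A^{-1}$ is forced to equal $\varphi_\#^{\pm1}$; you at least flag this explicitly as the point needing care.

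There is, however, a genuine gap: your structural claim that a closed Seifert manifold containing a horizontal torus $T$ is the mapping torus of a periodic diffeomorphism of $T$ is false. Cutting $M$ along a horizontal surface yields an $I$-bundle, and that $I$-bundle may be \emph{twisted}. This is exactly what happens for the sixth manifold on Hatcher's list, $M_6=N\cup_\varphi N$, where $N$ is the twisted $I$-bundle over the Klein bottle: cutting $M_6$ along the horizontal torus produces two copies of $N$, each with $\partial N=T$, and $M_6$ is \emph{not} a torus bundle over $\mathbb{S}^1$ (it fibers by tori with two exceptional Klein-bottle fibers). Consequently there is no global monodromy $\varphi_\#\in\mathrm{SL}(2,\Z)$ for your normalizer argument to act on, and your enumeration by the order of $\varphi_\#$ silently omits this manifold, which does admit a horizontal torus and is neither $\mathbb{T}^3$ nor the mapping torus of $-id$. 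The paper disposes of it by a separate argument: since $T=\partial N$ with $N$ compact and orientable, Lemma \ref{lema half homology} (``half lives, half dies'') gives that $\ker\bigl(H_1(T)\to H_1(N)\bigr)$ has rank one; this kernel is invariant under $f_*$, so $f_*$ has an eigenvalue $\pm1$ on $H_1(T)$, contradicting hyperbolicity (Lemma \ref{lema borde toro no es anosov}). Your proof needs this (or an equivalent) additional argument to cover the twisted case; the order-$1,2,3,4,6$ analysis alone does not suffice.
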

The rest of this section is devoted to proving this proposition. Let $M$ be a closed orientable irreducible Seifert manifold, and let $T$ be a horizontal torus in $M$. In \cite[Page 30]{hatcher} we can see that only six Seifert manifolds admit horizontal tori:
\begin{enumerate}
\item $M_1=\mathbb{T}^3$,
\item $M_2$ is the mapping torus of $-id$ on ${\mathbb T}^2$, that is, $\mathbb{S}^1\widetilde\times\mathbb{S}^1\widetilde\times\mathbb{S}^1$
\item $M_3$ is the mapping torus of $\left(\begin{array}{cc}-1 & -1\\ 1  &  0\end{array}\right)$
\item $M_4$ is the mapping torus of $\left(\begin{array}{cc}0&-1\\1&0\end{array}\right)$
\item $M_5$ is the mapping torus of $\left(\begin{array}{cc}0&-1\\1&1\end{array}\right)$
\item $M_6=N\cup_{\varphi} N$
\end{enumerate}
In the last case, $N=\mathbb{S}^1\widetilde\times\mathbb{S}^1\widetilde\times [0,1]$ is the twisted $I$-bundle over the Klein bottle and $M_6$ is the closed manifold formed by two copies of $N$, glued together along its boundary. $\partial N$ is a 2-torus and the two copies of $\partial N$ are glued together by the automorphism $\varphi = \left(\begin{array}{cc}0&1\\1&0\end{array}\right)$. $M_6$ is foliated by tori with the exception of two fibers that are Klein bottles.\par
The first five manifolds are torus bundles over $\mathbb{S}^1$ and $M_6$ is a fibration by tori except for two fibers. For all the six manifolds the horizontal torus is isotopic to a fiber. We shall then assume that $T$ is a fiber of the $M_i$. \par
Let $f$ be the hyperbolic automorphism of Proposition \ref{proposition reduccion}. For $M_i$ with $i=1,\dots,5$, the manifolds are mapping tori for some automorphism $h_i$. Since $T$ is a fiber, $h_i$ commutes with $f$.
But only $id$ and $-id$ commute with a hyperbolic automorphism. This implies that $M_3,M_4$ and $M_5$ do not admit horizontal Anosov tori.\newline\par
Let us show that $M_6$ does not admit a horizontal Anosov torus: Indeed, if the manifold is $M_6$, the horizontal Anosov torus $T$ splits the manifold into two components diffeomorphic to $N$. We have that $\partial N=T$. The following general lemma precludes the possibility that $M_6$ admit a horizontal Anosov torus, and finishes the proof of Proposition \ref{proposition toro horizontal}:
\begin{lemma}\label{lema borde toro no es anosov} If $N$ is a compact orientable 3-manifold such that $\partial N$ is a torus $T$, then $T$ is not an Anosov torus.
\end{lemma}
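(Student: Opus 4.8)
The plan is to argue by contradiction, using only the homology of $N$. Suppose $T=\partial N$ were an Anosov torus; then there would be a diffeomorphism $f\colon N\to N$ with $f(T)=T$ and with $f_\#|_T\colon\pi_1(T)\to\pi_1(T)$ a hyperbolic automorphism (we may assume $N$ connected, replacing it by the component that contains $T$, which $f$ must preserve since $f(T)=T$). Since $\pi_1(T)$ is abelian, the Hurewicz isomorphism identifies it with $H_1(T;\mathbb Z)\cong\mathbb Z^2$, so $f$ induces on $H_1(T;\mathbb Q)\cong\mathbb Q^2$ a hyperbolic matrix $A\in\operatorname{GL}_2(\mathbb Z)$. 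Writing $i\colon T=\partial N\hookrightarrow N$ for the inclusion, the equality $i\circ(f|_T)=f\circ i$ together with naturality of homology gives $i_*\circ A=f_*\circ i_*$ on $H_1(-;\mathbb Q)$; hence $A$ preserves the subspace $V:=\ker\!\bigl(i_*\colon H_1(T;\mathbb Q)\to H_1(N;\mathbb Q)\bigr)$.

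The one substantive input I would invoke is the ``half lives, half dies'' lemma: for a compact orientable $3$-manifold $N$, the inclusion-induced map $H_1(\partial N;\mathbb Q)\to H_1(N;\mathbb Q)$ has kernel of dimension exactly $\tfrac12\dim_{\mathbb Q}H_1(\partial N;\mathbb Q)$ (a standard consequence of Poincar\'e--Lefschetz duality and the long exact sequence of the pair $(N,\partial N)$; see \cite{hatcher}; equivalently, $V$ is a Lagrangian for the intersection form on $\partial N$). With $\partial N=T^2$ this yields $\dim_{\mathbb Q}V=1$.

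To finish, I would record the elementary fact that a hyperbolic element $A\in\operatorname{GL}_2(\mathbb Z)$ leaves no line of $\mathbb Q^2$ invariant: choosing $0\ne v\in V$, invariance forces $Av=\lambda v$ with $\lambda\in\mathbb Q$, so $A$ has a rational eigenvalue; the other eigenvalue is then $\operatorname{tr}(A)-\lambda\in\mathbb Q$ as well, so both eigenvalues are rational roots of the monic characteristic polynomial of $A$, hence integers whose product is $\det A=\pm1$, hence both of absolute value $1$ --- contradicting hyperbolicity. Thus no such $f$ can exist, and $T$ is not an Anosov torus.

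I expect the only real obstacle to be the invocation (or reproof) of the \emph{exact} dimension count in ``half lives, half dies'': it is essential that $V$ is neither $0$ --- otherwise the invariant-line obstruction is vacuous, so one genuinely needs the ``half dies'' half, namely that some nonzero rational boundary class becomes trivial in $N$ --- nor all of $H_1(T;\mathbb Q)$, i.e.\ that $i_*\neq 0$. Everything else (naturality, and the linear algebra over $\mathbb Q$) is routine, and the argument is uniform over all such $N$, requiring no prior knowledge that $T$ is incompressible.
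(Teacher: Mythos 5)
Your proposal is correct and follows essentially the same route as the paper: both invoke the ``half lives, half dies'' rank count (\cite[Lemma 3.5]{hatcher}) to produce a one-dimensional $f_*$-invariant subspace of $H_1(T;\mathbb{Q})$, and then conclude that $f_*|_T$ has an eigenvalue $\pm1$, contradicting hyperbolicity. Your write-up merely makes explicit the elementary step (rational eigenvalue $\Rightarrow$ integer eigenvalue of absolute value $1$) that the paper leaves implicit.
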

In order to prove Lemma \ref{lema borde toro no es anosov}, we shall need the following:
\begin{lemma}\cite[Lemma 3.5]{hatcher}\label{lema half homology}
 Let $M$ be a compact orientable 3-manifold with boundary $\partial M$. Consider the inclusion $i_*:H_1(\partial M)\hookrightarrow H_1(M)$. Let $\ker(i_*)$ be the kernel of the map induced by the inclusion $i_*$, and let $\rank(H_1(\partial M))$ be the rank of $H_1(\partial M)$. Then
 $$\rank(\ker(i_*))=\frac{1}{2}\rank(H_1(\partial M)).$$
\end{lemma}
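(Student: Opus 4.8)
The plan is to prove this purely algebraically, using Poincaré--Lefschetz duality for the pair $(M,\partial M)$ together with the single topological fact that $\partial M$ bounds $M$. Throughout I would work with coefficients in $\mathbb{Q}$, so that ``rank'' becomes ``dimension of a vector space'' and every pairing in sight is perfect; the integral statement then follows by tensoring with $\mathbb{Q}$. Write $V=H_1(\partial M;\mathbb{Q})$ and $n=\dim V$. Since $\partial M$ is a closed orientable surface (treating its components separately if it is disconnected), $n$ is even, and it suffices to prove $\dim\ker(i_*)=\rank(i_*)=n/2$.

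The first ingredient is a rank identity relating the two maps induced by the inclusion. Over $\mathbb{Q}$ the Kronecker pairings identify $H^1(X;\mathbb{Q})$ with the dual of $H_1(X;\mathbb{Q})$, and under these identifications the restriction $i^*:H^1(M;\mathbb{Q})\to H^1(\partial M;\mathbb{Q})$ is exactly the transpose of $i_*:V\to H_1(M;\mathbb{Q})$. A linear map and its transpose have equal rank, so $\rank(i^*)=\rank(i_*)$.

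The second, and decisive, ingredient is that Poincaré duality on $\partial M$ carries $\ker(i_*)$ isomorphically onto $\image(i^*)$. I would invoke the naturality of Poincaré--Lefschetz duality with respect to the boundary and restriction maps, which yields a square commuting up to sign
\[
\begin{array}{ccc}
H_2(M,\partial M;\mathbb{Q}) & \xrightarrow{\ \ \partial\ \ } & H_1(\partial M;\mathbb{Q})\\[2pt]
\cong\big\downarrow & & \big\downarrow\cong\\[2pt]
H^1(M;\mathbb{Q}) & \xrightarrow{\ \ i^*\ \ } & H^1(\partial M;\mathbb{Q}),
\end{array}
\]
whose vertical isomorphisms are given by capping with the fundamental classes $[M,\partial M]$ and $[\partial M]$, the right-hand one being precisely Poincaré duality of the closed surface $\partial M$. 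By exactness of the homology sequence of the pair one has $\image(\partial)=\ker(i_*)$, so the diagram gives $\dim\ker(i_*)=\dim\image(i^*)=\rank(i^*)$. Combining this with the rank identity and with rank--nullity for $i_*$ produces $\dim\ker(i_*)=\rank(i^*)=\rank(i_*)=n-\dim\ker(i_*)$, whence $\dim\ker(i_*)=n/2$, which is the assertion.

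I expect the main obstacle to be the honest justification of that duality square with the correct maps and signs, i.e. the compatibility of the connecting homomorphism $\partial$ with the restriction $i^*$ under capping with fundamental classes; this is the only step that is not purely formal. A more geometric route, which also explains the name ``half lives, half dies,'' is to endow $V$ with its nondegenerate skew-symmetric intersection form $\omega$ and to show that $\ker(i_*)$ is Lagrangian. Isotropy is immediate: transporting $\omega$ to the cup product on $H^1(\partial M)$ writes, for classes $i^*x,i^*y$ representing elements of $\ker(i_*)$, the value $\langle i^*x\cup i^*y,[\partial M]\rangle=\langle x\cup y,\,i_*[\partial M]\rangle$, and this vanishes because $i_*[\partial M]=0$ in $H_2(M)$, the surface $\partial M$ being the boundary of $M$. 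One is then left only with the inequality $\dim\ker(i_*)\ge n/2$, furnished again by the duality square, and isotropy upgrades it to the Lagrangian equality $\dim\ker(i_*)=n/2$.
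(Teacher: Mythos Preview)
Your argument is correct: the Poincar\'e--Lefschetz duality square identifying $\image(\partial)$ with $\image(i^*)$, combined with $\rank(i^*)=\rank(i_*)$ and rank--nullity, is the standard ``half lives, half dies'' proof, and the Lagrangian alternative you sketch is also valid.

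The paper, however, does not actually prove this lemma. It simply cites Hatcher's notes for the formula $\rank(\image(\partial))=\frac12\rank(H_1(\partial M))$ and then observes, via exactness of $H_2(M,\partial M)\to H_1(\partial M)\to H_1(M)$, that $\image(\partial)=\ker(i_*)$, which converts Hatcher's statement into the form quoted. So your proposal is strictly more than what the paper supplies: you reproduce the full duality argument that the paper outsources, together with the same exactness step. The underlying mathematics is the same (and is presumably what Hatcher does), but the paper treats the result as a black box whereas you open it.
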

Here ``rank" means the number of ${\mathbb Z}$ summands in a direct sum splitting into cyclic groups. If the homology with coefficients in ${\mathbb Q}$ is used, rank can be replaced by ``dimension". \par
In fact, Lemma 3.5 of \cite{hatcher} states that
\begin{equation}\label{formula rango hatcher}
\rank(\image(\partial))=\frac{1}{2}\rank(H_1(\partial M))
\end{equation}
where $\image(\partial)$ stands for the image of the boundary map $\partial: H_2(M,\partial M)\rightarrow H_1(\partial M)$. The fact that the following sequence
$$H_2(M,\partial M)\stackrel{\partial}{\longrightarrow}H_1(\partial M)\stackrel{i_*}{\longrightarrow}H_1(M)$$
is exact implies that $\image(\partial)$ is isomorphic to $\ker(i_*)$, hence $\rank(\image(\partial))=\rank(\ker(i_*))$.\newline \par
To prove Lemma \ref{lema borde toro no es anosov}, let us consider a compact orientable 3-manifold $N$ such that $\partial N$ is a torus. Then $\rank(H_1(\partial N))=2$.
Lemma \ref{lema half homology} implies that the rank of the kernel of the inclusion $i_*:H_1(T)\to H_1(M)$ is one.
This implies that $K=\ker(i_*)$ is a one-dimensional subspace of $H_1(T)$. We shall have that $f_*(K)=K$, where $f_*:H_1(T)\to H_1(T)$ is the isomorphism induced by any diffeomorphism $f:N\to N$. This implies that $f_*$ has an eigenvalue which is $\pm 1$. Hence $f$ cannot be isotopic to a hyperbolic automorphism on $T$. This implies that $T$ cannot be an Anosov torus. This finishes the proof of Lemma \ref{lema borde toro no es anosov}, and hence of Proposition \ref{proposition toro horizontal}.
\section{Vertical Anosov tori}\label{section vertical torus}
Continuing with the proof of Theorem \ref{theorem principal}, we consider an Anosov torus $T$ of an irreducible orientable closed 3-manifold $M$, and study now the situation (\ref{item vertical torus}) of Proposition
\ref{proposition reduccion}; namely, $T$ is a vertical torus in the interior of a Seifert component of the JSJ-decomposition of $M$, that is not $\partial$-parallel. \par
The main result of this section is that in this case, $M$ is like in the case (\ref{item horizontal torus}): either $M$ is ${\mathbb T}^3$ or $M$ is the mapping torus of $-id$ on ${\mathbb T}^2$. More precisely:
\begin{proposition}\label{proposition vertical torus} Let $M$ be a compact connected orientable irreducible Seifert manifold, with or without boundary, admitting a vertical Anosov torus $T$. Then there are only three possibilities:
\begin{enumerate}
\item $M=T\times[-1,1]$,
\item $M={\mathbb T}^3$, or
\item $M$ is the mapping torus of $-id$ on ${\mathbb T}^2$.
\end{enumerate}
\end{proposition}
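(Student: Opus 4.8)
The plan is to use the Seifert structure to produce a circle action (the fiber) and see how the Anosov data on $T$ interacts with it. Since $T$ is vertical, it is a union of fibers, so the fiber direction determines a distinguished primitive class $c \in \pi_1(T) \cong \mathbb Z^2$: namely the homotopy class of a regular fiber lying in $T$. The first key observation is that the diffeomorphism $f$ provided by Proposition \ref{proposition reduccion} — which we may take to preserve the JSJ-decomposition, and hence the Seifert component $S$ containing $T$ — must, up to isotopy, permute the Seifert fibration of $S$ (here I would invoke the essential uniqueness of the Seifert fibration, or argue directly that $f$ sends fibers to fibers up to isotopy since the fibration is canonical on a Seifert piece of the JSJ-decomposition that is not $\mathbb T^2\times I$ or a few small exceptions). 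Consequently $f_\#$ preserves (up to sign) the fiber class $c$, i.e. $f_\#(c) = \pm c$. But $f_\#|_T$ is hyperbolic, and a hyperbolic element of $GL(2,\mathbb Z)$ has no eigenvector with eigenvalue $\pm 1$ in $\mathbb Z^2$ — a primitive integer eigenvector would force an eigenvalue $\pm 1$, contradicting $|\text{trace}| > 2$. So this line of argument immediately rules out every Seifert component on which the fibration is genuinely canonical and $f$-invariant.

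The remaining possibilities are therefore exactly the degenerate ones: either $S = \mathbb T^2\times[-1,1]$ (the Seifert fibration is not unique, so $f$ need not respect any given one), which is case (1) of the Proposition and will be treated separately in Section \ref{section atoroidal}; or $S$ admits several inequivalent Seifert fiberings and $M = S$ is closed, which happens only for a short explicit list of small Seifert manifolds (the same phenomenon behind the list of six manifolds with horizontal tori). In that situation I would argue that $M$ being closed and Seifert-fibered in more than one way, together with the existence of a vertical incompressible torus that is not $\partial$-parallel, forces $M$ to be a torus bundle over $S^1$; then the monodromy must commute with the hyperbolic return map on $T$, and as in the proof of Proposition \ref{proposition toro horizontal}, only $\pm id$ commute with a hyperbolic automorphism of $\mathbb T^2$, yielding $M = \mathbb T^3$ or $M$ the mapping torus of $-id$, which are cases (2) and (3).

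Before any of that, one preliminary step is needed: one must actually check that a vertical torus $T$ in a Seifert manifold can carry a hyperbolic action at all only under these constraints. Concretely, cutting $S$ along $T$ and analysing the base orbifold shows that $T$ vertical and not $\partial$-parallel means the corresponding simple closed curve in the base $2$-orbifold $B$ is essential and two-sided; the fiber class $c$ is then a genuine direction in $H_1(T)$ canonically attached to $T$, which is what makes the eigenvector argument legitimate. I would also need to handle the case where a fiber of $S$ has a neighborhood that is an exceptional fibered solid torus abutting $T$ — Epstein's Theorem (Theorem \ref{teorema epstein}) and Lemma \ref{lema horizontal surface} let me describe these neighborhoods precisely and confirm that $f$ (up to isotopy) still preserves $c$.

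The main obstacle I expect is the non-uniqueness of the Seifert fibration: the clean ``$f$ preserves the fiber class'' argument fails precisely for the handful of small Seifert manifolds admitting several fiberings (including $\mathbb T^2\times[-1,1]$, the twisted $I$-bundle over the Klein bottle, $S^2\times S^1$-type pieces, and a few lens-space-like closed examples). Disposing of these requires either an appeal to the explicit classification of such manifolds or a direct homological argument in the spirit of Lemma \ref{lema borde toro no es anosov} and Lemma \ref{lema half homology}; making sure the list is complete and that each entry either reduces to $\mathbb T^2\times[-1,1]$, to $\mathbb T^3$, or to the mapping torus of $-id$ is the delicate bookkeeping at the heart of the proof. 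The case $S=\mathbb T^2\times[-1,1]$ I would explicitly defer to Section \ref{section atoroidal}, as the Proposition's statement already anticipates.
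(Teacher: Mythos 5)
Your core idea is the same one the paper uses: a hyperbolic $f_\#|_T$ cannot preserve a distinguished fiber slope on a vertical torus (a primitive integral eigenvector would force an eigenvalue $\pm 1$), so the Seifert fibration of the piece containing $T$ cannot be essentially unique, and one is thrown onto the short list of Seifert manifolds with non-isotopic fibrations. The difference is in the execution, and that is where your proposal has a genuine gap. The paper first cuts $M$ along $T$, so that $T$ becomes a \emph{boundary} torus of a compact Seifert manifold $N$ (Proposition \ref{proposition solid torus}); the relevant classification is then Lemma \ref{lemma non isotopic boundary} (Hatcher's Lemma 1.15), whose list has exactly three entries --- the solid torus (killed by incompressibility, Theorem \ref{teorema anosov incompresible}), the twisted $I$-bundle over the Klein bottle (killed by the half-lives-half-dies argument, Lemma \ref{lema borde toro no es anosov}), and $\mathbb{T}^2\times[0,1]$. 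Regluing then gives cases (1)--(3) via the ``monodromy commutes with a hyperbolic automorphism'' argument, which you also use. Working with $T$ as a boundary component also makes the statement ``the fibration and its $f$-image are non-isotopic \emph{on $T$}'' unambiguous, whereas with $T$ in the interior one must worry that the isotopy realizing uniqueness of the fibration need not preserve $T$.

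The gap: by staying in the ambient (possibly closed) manifold you must invoke the classification of \emph{closed} Seifert manifolds with several fiberings, and your claim that the surviving closed cases are ``forced to be torus bundles over $S^1$'' is not justified and is false as stated. The manifold $M_6=N\cup_\varphi N$ (the double of the twisted $I$-bundle over the Klein bottle, which fibers both over the Klein bottle and over $S^2(2,2,2,2)$) is a closed Seifert manifold with non-isotopic fibrations containing a vertical, non-$\partial$-parallel incompressible torus, and it is not a torus bundle over $S^1$: your argument does not exclude it. It must be excluded separately, and the natural tool is exactly Lemma \ref{lema borde toro no es anosov} applied to the twisted $I$-bundle obtained by cutting along $T$ --- which is what the cut-first strategy delivers for free. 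Similarly, your deferral of the $\mathbb{T}^2\times[-1,1]$ case to Section \ref{section atoroidal} is unnecessary: once every component of $M\setminus T$ is identified as $T\times[0,1]$, the reassembly (two components give $M=T\times[-1,1]$; one component gives a mapping torus whose monodromy commutes with $f|_T$, hence is $\pm id$ in the Seifert case) completes all three conclusions within this proposition.
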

If $M$ is a Seifert manifold with or without boundary and $T$ is a vertical Anosov torus in $M$, then we can split $M$ by  cutting it along $T$, and we obtain a Seifert manifold with an Anosov boundary torus. Hence we can always consider that $M$ is a Seifert manifold with $T\subset \partial M$. The proof of Proposition \ref{proposition vertical torus} is then reduced to the proof of:
\begin{proposition} \label{proposition solid torus} Let $N$ be a compact connected orientable irreducible Seifert manifold with an Anosov torus $T\subset \partial N$. Then, $N=T\times [0,1]$.
\end{proposition}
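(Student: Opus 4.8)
The strategy is to exploit the essential uniqueness of the Seifert fibration of $N$: a self-diffeomorphism of $N$ is then isotopic to a fibre-preserving one, and a fibre-preserving diffeomorphism cannot act hyperbolically on a boundary torus. So fix a Seifert fibration $\mathcal S$ of $N$. Every boundary component of a Seifert manifold is vertical, i.e.\ a union of fibres, and these are necessarily regular (an exceptional fibre has a solid-torus neighbourhood lying in the interior of $N$); hence $\mathcal S$ restricts on $T$ to a circle bundle $T\to\mathbb S^1$. Let $\phi\in\pi_1(T)\cong\mathbb Z^2$ be the class of a fibre of this bundle. Since that fibre is a non-separating simple closed curve on $T$, the class $\phi$ is primitive and non-zero; call it the \emph{fibre class} of $T$.

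Next I would distinguish cases according to whether the Seifert fibration of $N$ is unique up to isotopy. By the uniqueness theorem for Seifert fibrations (Waldhausen \cite{waldhausen}; see also \cite{hatcher}), a compact orientable irreducible Seifert manifold with non-empty boundary has a Seifert fibration that is unique up to isotopy \emph{unless} it is one of the following three: the solid torus $\mathbb D^2\times\mathbb S^1$; the product $\mathbb T^2\times[0,1]$; or the twisted $I$-bundle over the Klein bottle, $\mathbb S^1\widetilde\times\mathbb S^1\widetilde\times[0,1]$. Suppose $N$ is none of these, and let $f:N\to N$ be a diffeomorphism with $f(T)=T$ and $f_\#|_T$ hyperbolic. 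Then $f(\mathcal S)$ is again a Seifert fibration of $N$, hence isotopic to $\mathcal S$; composing $f$ with an ambient isotopy $g_t$ carrying $\mathcal S$ to $f(\mathcal S)$ (so $g_t\simeq\mathrm{id}$, whence $g_t$ preserves $\partial N$ and each of its components, and $g_1^{-1}|_T\simeq\mathrm{id}_T$), we may replace $f$ by $g_1^{-1}\circ f$, which is fibre-preserving, still satisfies $(g_1^{-1}\circ f)(T)=T$, and has $(g_1^{-1}\circ f)|_T$ isotopic to $f|_T$, hence induces the same map on $\pi_1(T)$. Now a fibre-preserving diffeomorphism takes regular fibres to regular fibres, and all fibres of the bundle $T\to\mathbb S^1$ are mutually isotopic in $T$; therefore $f_\#|_T(\phi)=\pm\phi$. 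Thus $\phi$ spans an eigenline of $f_\#|_T$ with eigenvalue $\pm1$, contradicting hyperbolicity. Hence $N$ must be one of the three exceptional manifolds.

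Two of these are ruled out immediately: both $\mathbb D^2\times\mathbb S^1$ and $\mathbb S^1\widetilde\times\mathbb S^1\widetilde\times[0,1]$ are compact orientable $3$-manifolds whose boundary is a single torus, so by Lemma \ref{lema borde toro no es anosov} that boundary torus is not an Anosov torus, contradicting the hypothesis on $T$. Therefore $N=\mathbb T^2\times[0,1]$, and since $T$ is one of its two boundary tori, $N=T\times[0,1]$, as claimed.

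The only genuinely non-routine input is the uniqueness of the Seifert fibration up to isotopy together with the precise list of manifolds-with-boundary for which it fails; once the fibre class $\phi$ is in hand, everything else is formal, and the two excluded exceptional cases fall to the already-established Lemma \ref{lema borde toro no es anosov}. The one point that requires a little care is that passing from $f$ to an isotopic, fibre-preserving representative does not change $f_\#|_T$ (up to sign), which is exactly why the forced eigenvalue $\pm1$ genuinely obstructs hyperbolicity.
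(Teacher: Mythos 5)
Your proof is correct and follows essentially the same route as the paper: both arguments rest on the uniqueness (up to isotopy) of the Seifert fibration of a bounded orientable Seifert manifold outside the three exceptional cases of \cite[Lemma 1.15]{hatcher}, deduce from the hyperbolicity of $f_\#|_T$ that $N$ must be one of those exceptions, and then eliminate all but $\mathbb T^2\times[0,1]$. The only cosmetic difference is that the paper discards the solid torus via incompressibility of Anosov tori (Theorem \ref{teorema anosov incompresible}) rather than via Lemma \ref{lema borde toro no es anosov}; both are valid.
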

Indeed, if $M$ is a manifold as in the hypothesis of Proposition \ref{proposition vertical torus}, and we split $M$ along the vertical Anosov torus $T$, then each component of $M\setminus T$ is a manifold $N$ in the hypotheses of Proposition \ref{proposition solid torus}, and hence each component $N$ is of the form ${\mathbb T}^2\times [0,1]$.
If $M\setminus T$ has two components, this readily implies that $M=T\times[-1,1]$. Otherwise, the manifold $N$ obtained by splitting $M$ along $T$ is connected and is $T\times [0,1]$. Then $M$ is a mapping torus of an automorphism $h$ of $T={\mathbb T}^2$. \par
But, since $T$ is an Anosov torus, there is a diffeomorphism $f:N\rightarrow N$ such that $f|T=f|\partial N$ is a hyperbolic toral automorphism, see Lemma \ref{lema hyperbolic automorphism}. Now, $h$ has to commute with $f$ on $T$. The only possibilities for $h$ are then: $h=id$, $h=-id$ or $h$ is a hyperbolic automorphism of ${\mathbb T}^2$. The last possibility corresponds to a mapping torus that is not a Seifert manifold. Hence, we can only have that $M$ is the mapping torus of $\pm id$ on ${\mathbb T}^2$, as claimed. \newline\par

To finish the proof of Proposition \ref{proposition solid torus}, it is convenient to recall that most Seifert manifolds have a unique Seifert fibration up to isotopy. Namely, we have the following:
\begin{lemma}\cite[Lemma 1.15]{hatcher}\label{lemma non isotopic boundary} The only compact connected {\em orientable} Seifert manifolds with boundary, admitting two Seifert fibrations that are non-isotopic in their boundary are the following:
\begin{enumerate}
\item the solid torus ${\mathbb D}^2\times {\mathbb S}^1$
\item the twisted $I$-bundle over the Klein bottle $\mathbb{S}^1\widetilde\times\mathbb{S}^1\widetilde\times [0,1]$, or
\item the torus cross the interval ${\mathbb T}^2\times [0,1]$
\end{enumerate}
\end{lemma}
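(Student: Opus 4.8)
The plan is to recast the statement in terms of the base $2$-orbifolds of the competing Seifert fibrations together with their orbifold Euler characteristics. A Seifert fibration of $M$ restricts on each boundary torus to a foliation by parallel fiber circles, whose isotopy class is a \emph{slope}; two fibrations are ``non-isotopic in their boundary'' precisely when they induce distinct slopes on some component of $\partial M$. So the goal becomes: show that a manifold carrying two fibrations with distinct boundary slopes must have a base orbifold that is ``small'', and then enumerate the finitely many such orbifolds. The converse (sufficiency) will be handled by exhibiting the two fibrations explicitly on each of the three listed manifolds.

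The mechanism for necessity is the central role of the fiber class. For a Seifert fibration $M\to B$ with regular fiber $h$ there is an exact sequence
\[
1\longrightarrow\langle h\rangle\longrightarrow\pi_1(M)\longrightarrow\pi_1^{\mathrm{orb}}(B)\longrightarrow 1,
\]
with $\langle h\rangle$ infinite cyclic. Call the base \emph{hyperbolic} when $\chi^{\mathrm{orb}}(B)<0$. If some fibration of $M$ has hyperbolic base, then $\pi_1^{\mathrm{orb}}(B)$ has trivial center, so $\langle h\rangle$ is exactly the center $Z(\pi_1(M))$; hence the fiber class is canonical up to sign, and by the classical rigidity theorem for Seifert fibrations of sufficiently large manifolds (in the spirit of Waldhausen \cite{waldhausen}) any second fibration is isotopic to the first, and in particular induces the same boundary slopes. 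Since this contradicts the hypothesis that the two fibrations differ on $\partial M$, I conclude that \emph{both} fibrations have non-hyperbolic base, that is $\chi^{\mathrm{orb}}(B)\ge 0$ with $\partial B\ne\emptyset$.

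It then remains to enumerate. Because $\chi^{\mathrm{orb}}(B)=\chi(\Sigma)-\sum_i(1-1/n_i)\le\chi(\Sigma)$, a nonnegative orbifold Euler characteristic forces $\chi(\Sigma)\ge 0$, so the underlying surface $\Sigma$ with boundary is a disk, an annulus, or a M\"obius band. Feeding this back into the formula constrains the cone-point data to exactly: the disk with at most one cone point ($\chi^{\mathrm{orb}}>0$), the disk with exactly two cone points of order $2$ ($\chi^{\mathrm{orb}}=0$), the annulus, or the M\"obius band. The orientable Seifert manifolds over these bases are, respectively, the solid torus $\mathbb{D}^2\times\mathbb{S}^1$ (disk, or disk with one cone point), the twisted $I$-bundle over the Klein bottle (disk with two order-$2$ cone points, and equally the M\"obius band), and $\mathbb{T}^2\times[0,1]$ (annulus). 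This is precisely the list in the statement.

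For sufficiency I exhibit two boundary-inequivalent fibrations on each: on the solid torus, two model fibered tori of distinct slopes; on $\mathbb{T}^2\times[0,1]$, the two product fibrations along the two $\mathbb{S}^1$ factors of $\mathbb{T}^2$; and on the twisted $I$-bundle over the Klein bottle, the fibration over the M\"obius band together with the fibration over the disk with two cone points of order $2$, which induce different slopes on the boundary torus. The main obstacle is the hyperbolic-base step: one must verify that $\langle h\rangle$ really is the full center and, more delicately, that equality of the canonical fiber class forces the two fibrations to be isotopic carrying one boundary slope to the other. A secondary difficulty is the orientation bookkeeping when $B$ is non-orientable (the M\"obius-band case), where $h$ need not be central and the ``center'' must be replaced by the unique maximal cyclic normal subgroup; fortunately this case lands inside the already-listed twisted $I$-bundle and does not enlarge the conclusion.
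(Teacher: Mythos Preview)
The paper does not prove this lemma; it is quoted from Hatcher's notes \cite{hatcher} and used as a black box in the proof of Proposition~\ref{proposition solid torus}. There is therefore nothing in the paper to compare your argument against.

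That said, your outline is the standard route to this classical result and is correct. The dichotomy via $\chi^{\mathrm{orb}}(B)$ is the right organizing principle: when the base is hyperbolic, $\pi_1^{\mathrm{orb}}(B)$ has trivial center, so $\langle h\rangle$ is singled out group-theoretically (as $Z(\pi_1(M))$, or as the unique maximal infinite cyclic normal subgroup in the non-orientable-base case), and rigidity for Haken manifolds upgrades this to an isotopy of fibrations carrying boundary slopes to boundary slopes. Your enumeration of orbifolds with $\chi^{\mathrm{orb}}\ge 0$ and nonempty boundary is complete, and the identifications of the total spaces are right, including the coincidence that the M\"obius-band base and $D^2(2,2)$ both yield the twisted $I$-bundle over the Klein bottle. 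The one step you correctly flag as delicate---passing from ``same fiber class in $\pi_1$'' to ``fibrations isotopic rel boundary''---does go through here because $M$ has nonempty incompressible boundary, hence is Haken, so Waldhausen's rigidity applies.
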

Now, let $T\subset \partial N$ be an Anosov torus of $N$ as in Proposition \ref{proposition solid torus}. Then there are two Seifert fibrations of $N$ that are not isotopic on $T$. Indeed, take any Seifert fibration of $N$, and consider the diffeomorphism $f:N\to N$ such that $f|T$ is a hyperbolic automorphism (Lemma \ref{lema hyperbolic automorphism}). The fibration of $N$ restricted to $T$ is not isotopic to its $f$-image (another Seifert fibration) on $T$. Then Lemma \ref{lemma non isotopic boundary} implies that $N$ is either the solid torus, the twisted $I$-bundle over the Klein bundle, or the torus cross the interval. \par
Since Anosov tori are incompressible (Theorem \ref{teorema anosov incompresible}), $N$ is not the solid torus.
If $N$ is the twisted $I$-bundle over the Klein bottle, then $N$ has a connected boundary consisting exactly of one torus, that is, $\partial M={\mathbb T}^2$. Lemma \ref{lema borde toro no es anosov} implies that $T$ cannot be the boundary of $N$. The only possibility left is that $M=T\times [0,1]$. This finishes the proof of Proposition \ref{proposition solid torus}, and hence of Proposition \ref{proposition vertical torus}.
%
\section{Anosov tori in the JSJ-decomposition}\label{section atoroidal}
Let us recall that in Proposition \ref{proposition reduccion}, we reduced the proof of Theorem \ref{theorem principal} to three cases: (\ref{item toro en JSJ decomposition}) $T$ is one of the tori of the JSJ-decomposition, (\ref{item vertical torus}) $T$ is a non-$\partial$-parallel vertical torus in the interior of a compact Seifert manifold, or (\ref{item horizontal torus}) $T$ is a horizontal torus in a closed Seifert manifold. We addressed cases (\ref{item horizontal torus}) and (\ref{item vertical torus}) in Sections \ref{seccion horizontal torus} and \ref{section vertical torus}, respectively. In this section we deal with case (\ref{item toro en JSJ decomposition}). Note that $T$ can be either the boundary of a Seifert component, in which case the component is $T\times [0,1]$, as we proved in Proposition \ref{proposition solid torus}; or else $T$ is the boundary of an atoroidal and acylindrical component. \par
The main result of this section is that an Anosov torus $T$ is never, in fact, a boundary of an atoroidal and acylindrical component of a JSJ-decomposition. This is the most delicate part of the proof of Theorem \ref{theorem principal}. With this result is easy to finish the proof of Theorem \ref{theorem principal}, as it is seen at the end of this section.
\begin{proposition}\label{proposition no atoroidal} Let $M$ be a compact, connected, orientable, irreducible, atoroidal and acylindrical 3-manifold such that $\partial M$ consists of incompressible tori. Then no component of $\partial M$ is an Anosov torus.
\end{proposition}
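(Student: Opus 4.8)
The plan is to exploit the rigidity of atoroidal, acylindrical $3$-manifolds: such a manifold has finite mapping class group, whereas a hyperbolic automorphism of $\mathbb{Z}^2$ has infinite order, so no diffeomorphism of $M$ can act hyperbolically on one of its boundary tori.

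First I would record that $M$ is Haken: it is compact, orientable and irreducible, and $\partial M$ is a nonempty union of tori, so $M$ is not a ball, and $H_2(M,\partial M;\mathbb{Q})\neq 0$ (for instance by Lemma~\ref{lema half homology}) produces a two-sided incompressible non-$\partial$-parallel surface. Being Haken, atoroidal, acylindrical, and with incompressible toral boundary, $M$ is a ``simple'' manifold, so Thurston's hyperbolization theorem for Haken manifolds endows it with a complete finite-volume hyperbolic metric whose cusps are exactly the boundary tori. (If one prefers to avoid hyperbolic geometry, Johannson's deformation theorem applies directly to simple Haken manifolds and already gives what is needed below.) By Mostow--Prasad rigidity $\operatorname{Out}(\pi_1 M)$ is finite, and since $M$ is Haken the natural map $\pi_0(\diff(M))\to\operatorname{Out}(\pi_1 M)$ is injective (Waldhausen: homotopic homeomorphisms of a Haken manifold with incompressible boundary are isotopic); hence $\pi_0(\diff(M))$ is finite.

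Next, suppose for contradiction that a component $T\subset\partial M$ is an Anosov torus, witnessed by $f:M\to M$ with $f(T)=T$ and $f_\#|_{\pi_1(T)}$ equal to a hyperbolic element $A\in\operatorname{GL}(2,\mathbb{Z})$. Finiteness of $\pi_0(\diff(M))$ gives $k\geq 1$ with $f^k$ isotopic to $\mathrm{id}_M$ through diffeomorphisms of $M$. Such an isotopy carries $\partial M$ to itself and, by connectedness, fixes the component $T$ at every time; restricting it to $T$ exhibits an isotopy from $f^k|_T=(f|_T)^k$ to $\mathrm{id}_T$. Passing to $\pi_1$, where $\pi_1(T)$ is abelian so conjugation is trivial, we get $A^k=(f_\#|_{\pi_1(T)})^k=\mathrm{id}$. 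But a hyperbolic matrix in $\operatorname{GL}(2,\mathbb{Z})$ has a real eigenvalue of modulus greater than $1$, hence is of infinite order, a contradiction. Therefore no component of $\partial M$ is an Anosov torus.

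I expect the main obstacle to be the first step, namely justifying that $M$ genuinely satisfies the hypotheses of hyperbolization (equivalently, that $\operatorname{Out}(\pi_1 M)$ is finite): one must argue that the atoroidal \emph{and} acylindrical hypothesis excludes the short list of Seifert-fibered manifolds with toral boundary that are atoroidal but not hyperbolic. The solid torus is ruled out by incompressibility of $\partial M$; while $\mathbb{T}^2\times[0,1]$, the twisted $I$-bundle over the Klein bottle, and circle bundles over hyperbolic orbifolds-with-boundary all carry essential annuli and are ruled out by acylindricity. This also makes clear that acylindricity cannot be dropped: $\mathbb{T}^2\times[0,1]$ is atoroidal, yet a Dehn twist along a vertical annulus makes a boundary torus Anosov. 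A secondary, purely technical point is the standard care needed with boundary behaviour in the theorems of Waldhausen and Johannson (homotopy equivalence versus homeomorphism, and homotopy versus isotopy for maps not required to fix $\partial M$), which is routine for Haken manifolds with incompressible boundary.
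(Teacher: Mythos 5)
Your argument is correct, but it proves the proposition by an entirely different route than the paper. You derive finiteness of the mapping class group of $M$ --- either via Thurston's hyperbolization for Haken manifolds plus Mostow--Prasad rigidity plus Waldhausen's homotopy-implies-isotopy theorem, or more directly via Johannson's finiteness theorem for simple Haken manifolds (the hypotheses here, atoroidal and acylindrical with incompressible toral boundary, are exactly Johannson's notion of ``simple'') --- and then observe that a hyperbolic element of $\operatorname{GL}(2,\mathbb{Z})$ has infinite order, so no power of $f|_T$ can be isotopic to the identity; your care in checking that the isotopy preserves the component $T$, and in excluding the Seifert-fibered exceptions to hyperbolization (solid torus by incompressibility; $\mathbb{T}^2\times[0,1]$, the twisted $I$-bundle over the Klein bottle, and vertical-annulus-carrying Seifert pieces by acylindricity), is exactly where the real content of your approach lies, and you handle it adequately. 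The paper instead argues constructively and at a much more elementary level: it uses the half-lives-half-dies lemma to produce an incompressible, non-$\partial$-parallel surface $S$ meeting $T$ in an essential curve $\gamma$, intersects $S$ with $f^n(S)$ for large $n$ so that $\#(\gamma\cap f^n(\gamma))$ grows, pigeonholes the finitely many isotopy classes of properly embedded arcs to assemble an embedded annulus $A$ from pieces of $S$ and $f^n(S)$, and then uses the fact that $\gamma$ and $f^n(\gamma)$ are lines of different slopes in $T$ to show $A$ is incompressible and not $\partial$-parallel --- contradicting acylindricity directly. Your proof is shorter and conceptually cleaner but rests on deep machinery; the paper's proof is self-contained at the level of Hatcher's notes and makes transparent why acylindricity (rather than, say, atoroidality alone) is the operative hypothesis, a point your $\mathbb{T}^2\times[0,1]$ Dehn-twist example also illustrates from the other side.
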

The strategy is to assume that there is an Anosov torus $T\subset \partial M$, and then use its properties to build an incompressible annulus that is not $\partial$-parallel.\newline\par
\begin{claim} \label{claim surface}For each torus $T\subset \partial M$, there exists a  compact, connected, orientable, incompressible, properly embedded surface $S$, such that $\partial S$ contains an essential curve $\gamma$ in $T$ and $S$ is not a $\partial$-parallel annulus.
\end{claim}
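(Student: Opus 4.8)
The plan is to build $S$ by purely homological means, using the ``half lives, half dies'' Lemma~\ref{lema half homology}, and then compress and prune; note that, beyond irreducibility of $M$ and incompressibility of $\partial M$, none of the hypotheses of Proposition~\ref{proposition no atoroidal} are needed for this Claim. Write $\partial M=T\sqcup T_1\sqcup\dots\sqcup T_{k-1}$ and equip $H_1(\partial M;\mathbb Q)=H_1(T;\mathbb Q)\oplus\bigoplus_j H_1(T_j;\mathbb Q)$ with its intersection pairing, a non-degenerate skew form that is the orthogonal direct sum of the standard symplectic forms on the summands. From the exact sequence $H_2(M,\partial M)\xrightarrow{\partial}H_1(\partial M)\xrightarrow{i_*}H_1(M)$ we get $K:=\ker(i_{*\mathbb Q})=\image(\partial_{\mathbb Q})$; by Lemma~\ref{lema half homology} (see also \eqref{formula rango hatcher}) $\dim_{\mathbb Q}K=\tfrac12\dim_{\mathbb Q}H_1(\partial M;\mathbb Q)$, and moreover $K$ is \emph{isotropic} for the pairing, since two classes dying in $H_1(M;\mathbb Q)$ have vanishing intersection number on $\partial M$; hence $K$ is Lagrangian. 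Its projection $p_T(K)\subseteq H_1(T;\mathbb Q)$ is then nonzero: were it $0$, we would have $K\subseteq\bigoplus_j H_1(T_j;\mathbb Q)=H_1(T;\mathbb Q)^{\perp}$, forcing $H_1(T;\mathbb Q)\subseteq K^{\perp}=K$, which is impossible since $K\cap H_1(T;\mathbb Q)=0$ and $H_1(T;\mathbb Q)\neq 0$. Clearing denominators we obtain a class $\alpha\in\ker\!\big(i_*\colon H_1(\partial M;\mathbb Z)\to H_1(M;\mathbb Z)\big)$ whose image $\bar\alpha\in H_1(T;\mathbb Z)$ is nonzero.

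Since $M$ is orientable, $\alpha$ lies in the image of $\partial\colon H_2(M,\partial M;\mathbb Z)\to H_1(\partial M;\mathbb Z)$, and any relative $2$-class is represented by a properly embedded orientable surface. Compressing such a surface and discarding its sphere and disc components --- which, by irreducibility of $M$ and incompressibility of $\partial M$, bound balls, resp.\ discs in $\partial M$, and so do not affect the class of the boundary --- yields, in the standard way, a properly embedded incompressible orientable surface $S_1$ with $[\partial S_1]=\alpha$. Summing over the components $S'$ of $S_1$ gives $\sum_{S'}[\partial S'\cap T]=\bar\alpha\neq 0$ in $H_1(T;\mathbb Z)$, so some component $S$ of $S_1$ has $[\partial S\cap T]\neq 0$. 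This $S$ is compact, connected, orientable, properly embedded and incompressible; and since $\partial S\cap T$ is a system of disjoint simple closed curves on $T$ with nonzero total homology class, one of them, $\gamma$, is homologically nontrivial in $T$, hence essential. Finally, $S$ cannot be a $\partial$-parallel annulus: the two boundary circles of a $\partial$-parallel annulus lie on a single torus component, parallel and oppositely oriented, so its boundary is null-homologous in $H_1(\partial M)$, contradicting $[\partial S\cap T]\neq 0$. Thus $S$ is as required.

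The crux is the step $p_T(K)\neq 0$: it genuinely uses the symplectic structure and not merely the rank count of Lemma~\ref{lema half homology}. The point is that $S$ must be allowed to meet the other boundary tori --- indeed $\ker\!\big(H_1(T;\mathbb Q)\to H_1(M;\mathbb Q)\big)$ may well be $0$ (for instance for a boundary torus of $\mathbb T^2\times[0,1]$), so in general there is no $S$ with all of $\partial S$ lying on $T$. Everything after the homological input --- realizing a relative $2$-class by an embedded surface, compressing, discarding trivial components, and passing to one component --- is routine and uses only that $M$ is irreducible with incompressible boundary.
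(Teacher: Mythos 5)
Your proof is correct, but in the only nontrivial case (more than one boundary torus) it takes a genuinely different route from the paper. The paper uses only the rank statement of Lemma \ref{lema half homology}: when $T\varsubsetneq\partial M$ it doubles $M$ along all boundary components other than $T$ to get a manifold $N$ with $\partial N=T\sqcup T$, finds a surface there by the rank count, and then cuts it back along the gluing tori, which forces an extra round of compressions to restore incompressibility and a slightly delicate bookkeeping of which piece carries the essential curve on $T$. You instead stay inside $M$ and use the full strength of ``half lives, half dies'': not just that $K=\ker\bigl(H_1(\partial M;\mathbb Q)\to H_1(M;\mathbb Q)\bigr)$ is half-dimensional, but that it is isotropic, hence Lagrangian, for the intersection form; since that form splits orthogonally over the boundary components, $K$ cannot project to $0$ in $H_1(T;\mathbb Q)$, which produces directly a class in $\ker i_*$ with nonzero $T$-component. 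This is cleaner (no doubling, no cutting back, and the non-$\partial$-parallel-annulus conclusion becomes a one-line homology observation), at the price of invoking the isotropy of $K$, which is not literally contained in the quoted Lemma \ref{lema half homology}; your one-line justification is the standard one (two classes bounding in $M$ are represented by boundaries of properly embedded surfaces whose intersection is a compact $1$-manifold), so this is fine. Two small points worth making explicit: ``clearing denominators'' should also include multiplying by the order of the torsion image in $H_1(M;\mathbb Z)$ so that $\alpha$ lies in the integral kernel (hence, by exactness, in $\operatorname{im}\partial$); and your observation that $\ker\bigl(H_1(T;\mathbb Q)\to H_1(M;\mathbb Q)\bigr)$ can vanish correctly identifies why the naive single-torus rank count does not suffice and why $S$ must be allowed to meet the other boundary tori.
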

Let us first consider the case $\partial M=T$. Then $\rank (H_1(\partial M))=2$. Equation (\ref{formula rango hatcher}) implies that $\rank (\image(\partial))=1$, hence there exists $\xi\in H_2(M,\partial M)$ such that $\partial \xi\ne 0$ in $H_1(\partial M)$. \cite[Lemma 3.6]{hatcher} states that $\xi$ is represented by an irreducible, properly embedded, compact, orientable surface $S$, possibly non-connected. Since $\partial S$ is non-trivial in $H_1(\partial M)$, there is a connected component of $S$, that is non-trivial in $H_1(\partial M)=H_1(T)$. This component satisfies the claim.\par
In case $T\varsubsetneq \partial M$, we construct a manifold $N$ such that $\partial N=T\sqcup T$: take two copies of $M$, $M\sqcup M$ and glue all corresponding pairs of connected components of boundaries of $M$, except $T\sqcup T$. In this way we obtain a connected compact orientable 3-manifold $N$ such that $\partial N=T\sqcup T$. Note that $H_1(\partial N)\ne 0$, so proceeding as in the previous case, we obtain an irreducible, properly embedded, compact, connected, orientable surface $S$ representing a non-trivial homology class in $H_2(N,\partial N)$ such that $\partial S$ is non-trivial in $H_1(\partial M)$. \par
Without loss of generality, we may assume that $S$ is transverse to $\partial M$. Cutting along $\partial M$ we obtain a new surface $R=S\cap M$, possibly non-irreducible and non-connected, whose boundary is non-trivial in $H_1(\partial M)$, since $R\sqcup R=S$. Let us see that we can cut $R$ along a finite number of curves so that $R$ becomes irreducible.\par
Let $R_i$ be a component of $R$ and suppose $\pi_1(R_i)\hookrightarrow \pi_1(M)$ is not injective. Then there is a disk $D$ realizing this non-injectivity \cite[Corollary 3.3]{hatcher}, that is, there is a disc $D$ such that $\partial D=D\cap R_i$ is a non-null homotopic circle in $R_i$. If we cut $R_i$ along $\partial D$ we obtain a new surface that is in the same homology class, since the cutting curve and $D$ are now duplicated in the boundary of $R_i$, but counted with different signs. Note that this new $R_i$ is also properly embedded.\par
Since $R$ is compact, this surgery simplifies $R_i$. Indeed, either $\partial D$ separates $R_i$, in which case the surgery splits $R_i$ into two components of lower genus; or not, in which case the surgery reduces the genus of $R_i$. We can perform finitely many cuttings until each resulting surface $R_i$ satisfies that $\pi_1(R_i)\hookrightarrow \pi_1(M)$ is injective, so $R_i$ is irreducible.\par
Note that this procedure did not change $\partial R$. So, we have obtained a new $R$ that is irreducible, properly embedded, and such that $\partial R$ is non-trivial in $H_1(\partial M)$, but with a non-trivial element in $H_1(T)$.
Hence, there is a component $R_i$ of $R$ containing an essential curve in $T$, and such that $\partial R_i$ is non-trivial in $H_1(\partial M)$. Then $R_i$ is not a $\partial$-parallel annulus. This finishes Claim \ref{claim surface}. \newline\par

Consider now an Anosov torus $T\subset \partial M$, and let $f:M\to M$ be a diffeomorphism such that $f|T$ is a hyperbolic automorphism. Let $S$ be the surface obtained in Claim \ref{claim surface}, and let $\gamma\subset \partial S$ be an essential curve in $T$, as obtained in Claim \ref{claim surface}.
\begin{claim}\label{claim annulus} For some suitable $n>0$, there exists a properly embedded annulus $A\subset S\cup f^n(S)$ such that one component of $\partial A$ is a closed curve in $T$ formed by one sub-arc $\gamma_1$ of $\gamma$ and one sub-arc $\gamma_2$ of $f^n(\gamma)$.
\end{claim}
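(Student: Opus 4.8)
The plan is to use the hyperbolicity of $f|_T$ to force $\gamma$ and $f^n(\gamma)$ to cross many times on $T$ when $n$ is large, and then to manufacture the annulus out of the pieces into which $S$ and $f^n(S)$ are cut by their intersection curve. First I would put $f^n(S)$ in general position: transverse to $\partial M$, transverse to $S$, and with $f^n(\gamma)$ in minimal position relative to $\gamma$ on $T$. Since $\gamma$ is essential, its class $v=[\gamma]$ is primitive in $H_1(T;\mathbb Z)$; since $f|_T$ is a hyperbolic toral automorphism it has irrational eigendirections, so $v$ is never an eigenvector, $(f_\#|_T)^n v$ is never parallel to $v$, and a short eigenbasis computation gives $i(\gamma,f^n(\gamma))=c\,|\lambda^n-\lambda^{-n}|\to\infty$ as $n\to\infty$, where $\lambda$ is the expanding eigenvalue of $f|_T$ and $c>0$. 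Each point of $\gamma\cap f^n(\gamma)$ lies on $\gamma\subset\partial S\subset S$ and on $f^n(\gamma)\subset f^n(S)$, hence on the properly embedded $1$-manifold $\alpha:=S\cap f^n(S)$, where, being in $\partial M$, it is the endpoint of an arc component. Using that $M$ is irreducible and $S$, $f^n(S)$ are incompressible, standard innermost-disk surgeries remove the inessential circle components of $\alpha$ without altering $\partial\alpha$; so $\alpha$ contains at least $\tfrac12 i(\gamma,f^n(\gamma))$ arcs meeting $\gamma$.

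Next I would invoke a classical finiteness statement: there is a bound $N=N(S)$, depending only on the homeomorphism type of $S$ (hence of $f^n(S)$), on the number of elements of any family of pairwise disjoint, pairwise non-isotopic, properly embedded arcs in $S$ — a maximal such family cuts $S$ into disks and has $O(|\chi(S)|)$ members. Taking $n$ so large that $\alpha$ has more than $N^2$ arcs with both endpoints on $\gamma$, a pigeonhole argument on the pair (isotopy class in $S$, isotopy class in $f^n(S)$) produces two arcs $\delta,\delta'$ of $\alpha$ that are isotopic in $S$ and isotopic in $f^n(S)$. Being isotopic disjoint arcs of $S$ with all endpoints on $\gamma$, $\delta$ and $\delta'$ cobound a rectangle $D_S\subset S$ whose two sides other than $\delta,\delta'$ are sub-arcs $\gamma_1,\gamma_1'$ of $\gamma$; likewise they cobound a rectangle $D_f\subset f^n(S)$ with the two remaining sides sub-arcs $\gamma_2,\gamma_2'$ of $f^n(\gamma)$. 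After discarding superfluous arcs and pushing the rectangles off the remaining components of $\alpha$, one arranges $D_S\cap f^n(S)=D_f\cap S=\delta\cup\delta'$. Then $A:=D_S\cup D_f$, glued along $\delta\cup\delta'$, is embedded, connected, has $\chi(A)=1+1-1-1=0$, and — orienting $S$ and $f^n(S)$ compatibly — is orientable, hence an annulus; it lies in $S\cup f^n(S)$, is properly embedded with $\partial A\subset T$, and one of its two boundary circles is $\gamma_1\cup\gamma_2$, one sub-arc of $\gamma$ joined to one sub-arc of $f^n(\gamma)$, as required.

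The main obstacle I expect is the boundary bookkeeping. One must make sure that the two arcs delivered by pigeonhole can be chosen with \emph{both} endpoints on the single curve $\gamma$, and not on other components of $\partial S$ that also meet $T$, nor on other boundary tori of $M$ that arcs of $\alpha$ might reach; this is precisely what forces $n$ to be large, so that the contribution of $\gamma\cap f^n(\gamma)$ dominates all the bounded contributions, and it may also require arranging at the outset that $\partial S\cap T$ is a single curve. One must also check that the rectangles $D_S,D_f$ can be isotoped rel $\partial$ to meet only along $\delta\cup\delta'$, and that the glued surface is orientable rather than a M\"obius band; the latter follows from a compatible choice of orientations, but the former is where the careful part of the argument lies.
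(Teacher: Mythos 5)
Your overall strategy coincides with the paper's: use hyperbolicity of $f|_T$ to make $\#\bigl(\gamma\cap f^n(\gamma)\bigr)\to\infty$, apply a pigeonhole argument on the (finitely many) isotopy classes of properly embedded arcs in $S$ and in $f^n(S)$ to find two intersection arcs that are parallel in both surfaces, and glue the two parallelism rectangles into an annulus. However, there is a genuine gap exactly where you flag the ``boundary bookkeeping'': your construction needs two arcs of $S\cap f^n(S)$ with \emph{both} endpoints in $\gamma\cap f^n(\gamma)$, and the existence of even one such arc is not guaranteed. An arc of $S\cap f^n(S)$ starting at a point of $\gamma\cap f^n(\gamma)$ has its other endpoint somewhere in $\partial S\cap \partial f^n(S)$, which may lie on another component of $\partial S$, or on another boundary torus of $M$ altogether. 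Your proposed remedy --- that the contribution of $\gamma\cap f^n(\gamma)$ dominates ``bounded'' contributions --- does not work: the other components of $\partial S\cap T$ are essential curves parallel to $\gamma$, so their intersections with $f^n(\partial S)$ grow at the same exponential rate, and intersections on other boundary tori are not controlled at all. A concrete failure mode: if $S$ is an incompressible annulus running from $T$ to another boundary torus $T'$ and every arc of $S\cap f^n(S)$ is essential in $S$, then no arc has both endpoints on $\gamma$, and your pigeonhole has nothing to act on.

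The paper's construction sidesteps this by demanding only that each arc in the family $\Gamma_n$ have \emph{one} endpoint in $\gamma\cap f^n(\gamma)$; this already gives $\#\Gamma_n\to\infty$. After pigeonholing, the two rectangles are closed up at the near ends by a sub-arc $\gamma_1\subset\gamma$ (resp.\ $\gamma_2\subset f^n(\gamma)$) and at the far ends by arcs $\beta_1\subset\partial S$ and $\beta_2\subset\partial f^n(S)$ wherever the other endpoints happen to lie. The resulting annulus has one boundary circle equal to $\gamma_1\cup\gamma_2\subset T$ --- which is all the claim asserts --- while the other boundary circle $\beta_1\cup\beta_2$ sits elsewhere in $\partial M$. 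If you relax your requirement on the arcs accordingly, the rest of your argument (transversality, removal of inessential circles of intersection, finiteness of arc isotopy classes, orientability of the glued surface) goes through essentially as you wrote it.
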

Without loss of generality we assume that $S$ is transverse to $f^n(S)$ for all $n>0$. Let $\Gamma_n$ be the set of properly embedded curves of $S\cap f^n(S)$ having an endpoint in $\gamma\cap f^n(\gamma)$. Note that, since $\# f^n(\gamma)\cap \gamma$ goes to infinity as $n\to \infty$, we also have that $\#\Gamma_n\to \infty$ with $n$. \par
On the other hand, the number of non-isotopic properly embedded simple curves contained in $S$ has an upper bound $\kappa$. Note that the same $\kappa$ is also an upper bound for the number of non-isotopic properly embedded simple curves contained in $f^n(S)$, for each $n>0$. Hence, by taking $n$ sufficiently large, we can obtain $\kappa +1$ curves in $\Gamma_n$ that are isotopic in $(S,\partial S)$. At least two of these curves, say $\alpha_1$ and $\alpha_2$, are also isotopic in $(f^n(S),\partial f^n(S))$. \par
The annulus $A$ is built in the following way. Construct a rectangle $R$ by joining $\alpha_1$ and $\alpha_2$ by means of an arc $\gamma_1\subset\gamma$ and and arc $\beta_1\subset\partial S$. Construct another rectangle $R'$ by joining $\alpha_1$ and $\alpha_2$ by means of an arc $\alpha_2\subset f^n(\gamma)$ and an arc $\beta_2\subset \partial f^n(S)$. This is possible since $\alpha_1$ and $\alpha_2$ belong to $\Gamma_n$ and are isotopic both in $(S,\partial S)$ and $(f^n(S),\partial f^n(\partial S))$. \par
In this way we obtain a properly embedded annulus $A$ bounded by $\gamma_1\cup\gamma_2\subset \gamma\cup f^n(\gamma)\subset T$ and $\beta_1\cup\beta_2\subset\partial M$. This proves Claim \ref{claim annulus}.\newline\par
The rest of the section is devoted to proving that the annulus $A$ obtained in Claim \ref{claim annulus} is non-$\partial$-parallel.
\begin{claim}\label{claim a non parallel} The anuulus $A$ is non-$\partial$-parallel.
\end{claim}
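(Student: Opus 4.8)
The plan is to argue by contradiction. Suppose $A$ is $\partial$-parallel: then $A$ is isotopic rel $\partial A$ to an annulus $A'\subset\partial M$, and $A\cup A'$ cobounds a region $W$ diffeomorphic to a solid torus, with $A'\subset\partial M$ and $A\cap\partial M=\partial A=\partial A'$. Since $A'$ is connected and the boundary circle $c_0=\gamma_1\cup\gamma_2$ of $A$ lies in $T$, the other boundary circle $c_1=\beta_1\cup\beta_2$ lies in $T$ as well and $A'\subset T$. The idea is to use the product region $W$ to isotope the surfaces $S$ and $f^n(S)$ so as to destroy intersection points, contradicting a minimality property one builds into the construction.

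So, when producing $S$ and the family $\Gamma_n$ of Claims \ref{claim surface} and \ref{claim annulus}, I would work with a pair of surfaces $(S,f^n(S))$, in general position, realizing the minimum --- ordered lexicographically --- of $\bigl(\#(\partial S\cap\partial f^n(S)),\ \#(S\cap f^n(S))\bigr)$ among all pairs isotopic to $(S,f^n(S))$ in $M$. Such a minimal pair exists, and, being isotopic to $(S,f^n(S))$, its first component still satisfies the conclusions of Claim \ref{claim surface}. This minimal position has two virtues. First, the pigeonhole step of Claim \ref{claim annulus} still works: $\gamma$ and $f^n(\gamma)$ are essential simple closed curves in the torus $T$, and since $f_*$ on $H_1(T)$ is hyperbolic no positive power of $f_*$ fixes the line through $[\gamma]$, so the geometric intersection number $i(\gamma,f^n(\gamma))$ is positive for every $n\ge 1$ and tends to $\infty$ with $n$; in minimal position $\#(\gamma\cap f^n(\gamma))=i(\gamma,f^n(\gamma))$, each such point is an endpoint of an arc of $S\cap f^n(S)$, and hence $\#\Gamma_n\to\infty$, as needed. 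Second, $\#(S\cap f^n(S))>0$ for every $n$, so the annulus $A=R\cup R'$ really exists.

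Assume now, in this minimal position, that $A$ is $\partial$-parallel, with product region $W$ as above, and write $R'=A\cap f^n(S)$, a quadrilateral lying in $\partial W$ and meeting $S$ exactly along the seam arcs $\alpha_1,\alpha_2$. Since $S$ and $f^n(S)$ are incompressible and $W$ is a solid torus, an innermost-disc/outermost-arc argument on $S\cap W$ and $f^n(S)\cap W$ lets one isotope these surfaces so that neither meets $\operatorname{int}W$ while $\bigl(\#(\partial S\cap\partial f^n(S)),\,\#(S\cap f^n(S))\bigr)$ does not increase; here the incompressibility of $T=\partial M$ (Theorem \ref{teorema anosov incompresible}) is used to absorb the annular components of $f^n(S)\cap W$ that run parallel to the core of $W$. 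Once $\operatorname{int}W$ is free of both surfaces, an ambient isotopy of $M$ supported near $W$ sweeps $R'$ across $W$ onto a copy of $R'$ inside $A'$, pushed slightly off $\partial M$ into $\operatorname{int}M$. This takes $f^n(S)$ to a surface still isotopic to $f^n(S)$, still meeting $T$ in a curve isotopic to $f^n(\gamma)$, but with $\alpha_1,\alpha_2$ deleted from $S\cap f^n(S)$ and their endpoints $p_1,p_2\in\gamma\cap f^n(\gamma)$ and $q_1,q_2\in\partial S\cap\partial f^n(S)$ deleted from $\partial S\cap\partial f^n(S)$; provided the sweep creates no new intersections, the complexity strictly decreases, contradicting minimality. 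Therefore $A$ cannot be $\partial$-parallel.

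The step I expect to be the real obstacle is this last surgery: controlling $S\cap W$ and $f^n(S)\cap W$ and verifying that sweeping $R'$ across $W$ introduces no new intersection curves --- in particular ruling out or absorbing the components of $f^n(S)\cap W$ that wind around the core of the solid torus $W$ (which is where incompressibility of the surfaces and of $\partial M$ is genuinely needed), and keeping track of how the boundary arcs $\gamma_1,\gamma_2,\beta_1,\beta_2$ move within $\partial M$ during the sweep. For use in the sequel one also wants $c_0$ and $c_1$ to be essential in $T$, so that $A$ is incompressible and does contradict acylindricity; this can be secured when building $A$ in Claim \ref{claim annulus} by choosing the sub-arcs $\gamma_1\subset\gamma$ and $\gamma_2\subset f^n(\gamma)$ appropriately, using once more that $\gamma$ and $f^n(\gamma)$ are essential in $T$.
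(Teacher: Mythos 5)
Your strategy (put the pair $(S,f^n(S))$ in a position minimizing a lexicographic complexity, then derive a contradiction by sweeping the rectangle $R'$ across the parallelism region $W$) is genuinely different from the paper's, but it has a gap at exactly the step you flag as "the real obstacle," and I believe that step cannot be repaired in the form you propose. The preliminary move --- isotoping $S$ and $f^n(S)$ so that neither meets the interior of $W$ --- is in general impossible, not merely delicate. Indeed, write $c_0=\gamma_1\cup\gamma_2$ and $c_1=\beta_1\cup\beta_2$ for the two boundary circles of $A'\subset T$. Lifting to the universal cover of $T$, the class $[c_0]$ is a sum of a nonzero vector in the direction of $[\gamma]$ and a nonzero vector in the direction of $[f^n(\gamma)]$; since $f_\#|_T$ is hyperbolic these directions are distinct, so $[c_0]$ is not proportional to $[\gamma]$ and the geometric intersection number $i(\gamma,c_0)$ is positive. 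As $c_1$ is parallel to $c_0$ in $T$, the curve $\gamma$ must enter the interior of \emph{both} complementary annuli of $c_0\cup c_1$, in particular $\operatorname{int}A'$; and wherever $\partial S$ crosses $\operatorname{int}A'$ the properly embedded surface $S$ is forced into $\operatorname{int}W$ (a collar of $\operatorname{int}A'$ lies in $W$). This obstruction is homological, so no isotopy clears $\operatorname{int}W$, and consequently the sweep of $R'$ across $W$ necessarily creates new intersections with $S$; the claimed strict decrease of complexity is not established. A secondary, fixable issue is that minimizing over pairs isotopic to $(S,f^n(S))$ decouples the second surface from being the literal $f^n$-image of the first, so the minimality hypothesis has to be formulated with some care to remain compatible with the construction of $A$ in Claim \ref{claim annulus}.

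For comparison, the paper avoids the parallelism region altogether. From Claim \ref{claim surface}, $S$ (hence $f^n(S)$) is incompressible and not a $\partial$-parallel annulus, so Lemma \ref{lema d incompresible o d paralelo} makes both surfaces $\partial$-incompressible. If $A$ were $\partial$-parallel there would be a $\partial$-compressing disk $D$ meeting $A$ in the arc $\alpha_1\subset S\cap f^n(S)$; $D$ is then a $\partial$-compressing disk for $S$ and for $f^n(S)$ simultaneously, producing disks $D_1\subset S$ and $D_2\subset f^n(S)$ with $\partial(D_1\cup D_2)=l_1\cup l_2$, $l_1\subset\gamma$, $l_2\subset f^n(\gamma)$. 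The same different-slopes argument that shows $c_0$ is essential shows $l_1\cup l_2$ is essential in $T$, so it cannot bound a disk --- a contradiction obtained without any minimal-position bookkeeping. If you want to salvage your approach, you would need to replace the "empty out $W$" step by an argument that works in the presence of the forced intersections, which essentially leads you back to a $\partial$-compression argument of this kind.
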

Let us begin by proving that $A$ is incompressible. Indeed, we shall see that $\gamma_1\cup \gamma_2\subset A\cap T$ is an essential curve in $T$. We loose no generality in assuming that $\gamma$ is a line in $T$ with rational slope. Since $f|T$ is a hyperbolic automorphism, $f^n(\gamma)$ is a line with different slope. The lifts of $\gamma$ and $f^n(\gamma)$ to the universal covering cannot enclose a region, as it is seen in the Figure \ref{closed_curve}. Thus any closed curve formed by a segment in $\gamma$ and a segment in $f^n(\gamma)$, like $\gamma_1\cup \gamma_2$, is essential in $T$. This implies that $A$ is incompressible.\par
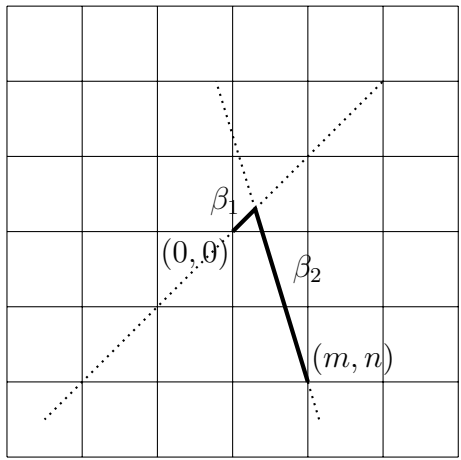
\begin{figure}[h]
\begin{tikzpicture}
 \begin{scope}[ultra thick]
 \draw node at (2.9,.4) {$\beta_1$} (3,0) -- (3.3,.3) node at (4,-.5) {$\beta_2$} -- (4,-2);
 \end{scope}
 \draw (0,-3) grid +(6,6);
 \node at (2.5,-.3) {$(0,0)$};
 \node at (4.6,-1.7){$(m,n)$};
 \draw[dotted,thick] (3.3,.3) -- (5,2);
 \draw[dotted,thick] (3.3,.3) -- (.5,-2.5);
 \draw[dotted,thick] (3.3,.3) -- (2.78,2);
 \draw[dotted,thick] (4,-2) -- (4.15,-2.5);
\end{tikzpicture}
\caption{\label{closed_curve}
 A closed curve in $T$ formed by segments of different slope}
\end{figure}

In order to prove that $A$ is not $\partial$-parallel, let us introduce the following concept:
\begin{definition}\cite[Page 14]{hatcher}\label{definicion d-incompresible} Let $S$ be a surface properly embedded in $M$, that is, $\partial S\subset \partial M$. A {\em $\partial$-compressing disk} $D\subset M$ is a disk such that $\partial D$ consists of two arcs $\alpha$ and $\beta$ such that $\alpha\cap \beta=\partial \alpha=\partial\beta$, where $\alpha=D\cap S$ and $\beta=D\cap \partial M$, see Figure \ref{figura compressing disk}. \par
A properly embedded surface $S$ is {\em $\partial$-incompressible} if for each compressing disk $D$ there is a disk $D'\subset S$ with $\alpha \subset \partial D'$ and $\partial D'\setminus \alpha\subset \partial S$, see Figure \ref{figura d incompresible}.
\end{definition}
\begin{figure}\begin{tikzpicture}
 \draw[thick] (-3,1) -- (-1,4) -- (6,4) -- (4,1) -- cycle;
 \draw[thick] (-1,1.5) -- (0.5,3.5) -- (3,3.5) -- (1.5,1.5) -- cycle;
 \draw[thick] (-1,1.5) parabola[parabola height=2cm] (0.5,3.5);
 \draw[thick] (1.5,1.5) parabola[parabola height=2cm] (3,3.5);
 \draw[thick] (-.3,4.5) -- (2.2,4.5);%
 \draw[thick,fill=transparent!5,opacity=.5] (.25,1.5) parabola[parabola height=2cm] (1.75,3.5) -- cycle;
  \node at (6,2.5) {$\partial M$};
  \node at (1,3.3) {$D$};
   \node at (1,5) {$S$};
\end{tikzpicture}
\caption{\label{figura compressing disk} A $partial$-compressing disk $D$}
\end{figure}
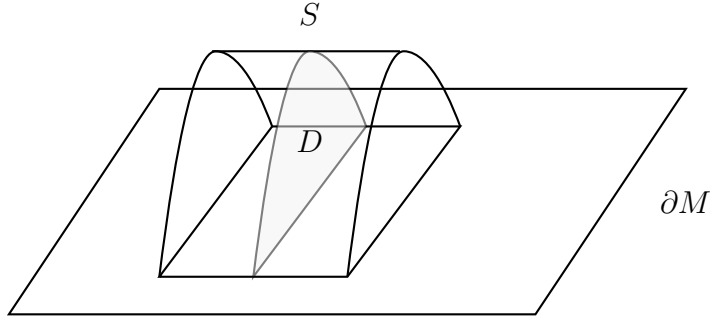
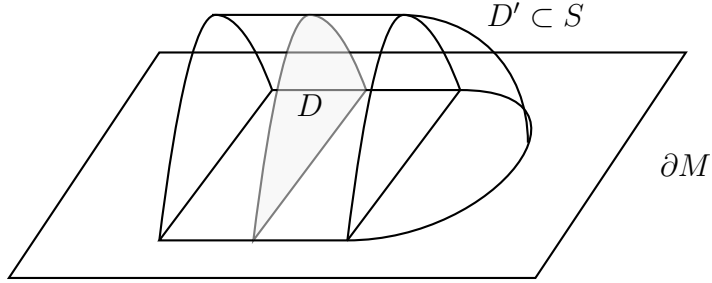
\begin{figure}\begin{tikzpicture}
 \draw[thick] (-3,-4) -- (-1,-1) -- (6,-1) -- (4,-4) -- cycle;
 \draw[thick] (-1,-3.5) -- (0.5,-1.5) -- (3,-1.5) -- (1.5,-3.5) -- cycle;

 \draw[thick] (3,-1.5) .. controls +(0:2cm) and +(0:2cm) .. (1.5,-3.5);
 \draw[thick] (2.2,-.5) .. controls +(0:1cm) and +(90:1cm) .. (3.9,-2.2);
 \draw[thick] (-1,-3.5) parabola[parabola height=2cm] (0.5,-1.5);
 \draw[thick] (1.5,-3.5) parabola[parabola height=2cm] (3,-1.5);
 \draw[thick] (-.3,-.5) -- (2.2,-.5);
 \draw[thick,fill=transparent!5,opacity=.5] (.25,-3.5) parabola[parabola height=2cm] (1.75,-1.5) -- cycle;
 \node at (6,-2.5) {$\partial M$};
 \node at (4,-.5) {$D'\subset S$};
  \node at (1,-1.7) {$D$};
\end{tikzpicture}
\caption{\label{figura d incompresible} A $\partial$-incompressible surface $S$}
\end{figure}
Let us also recall the following property of incompressible surfaces;
\begin{lemma}\cite[Lemma 1.10]{hatcher}\label{lema d incompresible o d paralelo} Let $N$ be a compact irreducible 3-manifold, such that $\partial N$ consists of incompressible tori. If $S$ is a connected incompressible surface properly embedded in $N$, then either $S$ is a $\partial$-parallel annulus or else $S$ is $\partial$-incompressible.
\end{lemma}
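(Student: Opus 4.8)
The plan is to prove the contrapositive: assuming the connected incompressible surface $S$ is not $\partial$-incompressible, I will show it is a $\partial$-parallel annulus. First dispose of degenerate cases: if $\partial S=\emptyset$ there are no $\partial$-compressing disks, so $S$ is vacuously $\partial$-incompressible; and if $S$ is a disk, every properly embedded arc of $S$ cuts off a subdisk, so $S$ is $\partial$-incompressible. Hence assume $\partial S\ne\emptyset$ and $S$ is not a disk. Then every component of $\partial S$ is essential in $S$ (a null-homotopic boundary curve would force $S$ to be a disk), hence essential in $N$ by $\pi_1$-injectivity, hence essential on the torus component of $\partial N$ containing it (an inessential curve on that torus is null-homotopic in $N$). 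In particular, on each boundary torus the curves of $\partial S$ are parallel essential circles.

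Since $S$ is not $\partial$-incompressible, there is a $\partial$-compressing disk $D$ for which the arc $\alpha=D\cap S$ does not cut off a subdisk of $S$; I $\partial$-compress $S$ along $D$ to get a surface $S'$ with $\chi(S')=\chi(S)+1$, so that $S$ is recovered from $S'$ by attaching a band. The basic technical fact, by a routine innermost-disk argument using only the irreducibility of $N$, is that $S'$ is again incompressible: a compressing disk $E$ for $S'$ can be isotoped off the two parallel copies $D_\pm$ of $D$ in $S'$ (kill innermost intersection circles using irreducibility, push off outermost arcs), after which $E$ becomes a compressing disk for $S$, forcing $\partial E$ to have been inessential. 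Moreover $S'$ has no sphere component (irreducibility) and no closed component (the $\partial$-compression only alters $\partial S$ near $\partial\alpha$ and never deletes it), and any disk component of $S'$ has boundary inessential on $\partial N$ (else it would be essential in $N$ yet bound a disk) and hence is $\partial$-parallel by irreducibility. So, apart from such trivial ($\partial$-parallel) disks, every component of $S'$ is an incompressible, non-sphere, non-disk surface with nonempty boundary and $\chi\le0$.

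Now I iterate: whenever a component of the current surface is incompressible but neither a trivial disk nor $\partial$-incompressible, it carries a $\partial$-compressing disk whose arc does not cut off a subdisk, and we $\partial$-compress along it. Each step keeps the surface incompressible and raises the total Euler characteristic by $1$; tracking the split into trivial disks versus non-disk pieces — new disks arise only by $\partial$-compressing annuli, new annuli only by $\partial$-compressing pieces of $\chi\le-1$, and each such $\chi\le-1$ compression raises the Euler characteristic of the non-disk part (which stays $\le0$) by $1$ — shows the process stops after at most $-\chi(S)$ steps at a surface $\Sigma$ whose components are trivial $\partial$-parallel disks together with incompressible $\partial$-incompressible non-disk surfaces, with $\chi(\Sigma)\ge\chi(S)+1$. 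Reading the process backwards exhibits $S$ as obtained from $\Sigma$ by attaching $b$ bands, each band a $\partial$-compressing disk, where $b\ge(\#\text{components of }\Sigma)-1$ by connectivity of $S$. From $\chi(S)=\chi(\Sigma)-b$, $d\le c$ (with $d$ the number of trivial disks, $c$ the number of components), and $\chi\le0$ on every non-disk component, one gets $\chi(S)\le1$; the value $1$ is excluded since $S$ is not a disk, and $\chi(S)=0$ forces $\Sigma$ to be either $d$ trivial disks joined by a tree of bands plus exactly one extra band — whence $S$ is an annulus whose neighbourhood (of the $\partial$-parallel balls and $\partial$-compressing bands) is a solid torus meeting $\partial N$ in an annulus, so $S$ is $\partial$-parallel — or $\Sigma$ is $d$ trivial disks and one essential annulus joined by a tree, a possibility which contradicts the hypothesis as explained next.

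I expect the main obstacle to be exactly this bookkeeping of the iterated $\partial$-compression: one must simultaneously argue that $\partial$-compression preserves incompressibility (easy with irreducibility), that the process terminates (an Euler-characteristic argument requiring careful control of how annuli and disks are created), and that the only connected incompressible surface — other than a disk — reassemblable from trivial disks by $\partial$-compressing bands is a $\partial$-parallel annulus. The subtle point is ruling out the case where $\Sigma$ retains a genuine essential (hence $\partial$-incompressible) annulus component: here one observes that banding a trivial disk onto any $\partial$-incompressible surface leaves it $\partial$-incompressible (a $\partial$-compressing disk of the result, isotoped off the ball bounded by the trivial disk, becomes one of the original surface, whose arc therefore cuts off a subdisk), so $S$ would itself be $\partial$-incompressible, contrary to assumption.
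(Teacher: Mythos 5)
The paper does not actually prove this statement---it is quoted verbatim from Hatcher's notes (Lemma 1.10 there)---so the comparison below is with Hatcher's argument. Your iterated $\partial$-compression scheme has a genuine gap at the endgame. The bookkeeping you set up only yields $\chi(S)\le 1$, which holds for every connected surface with boundary and carries no information; you then dispose of $\chi(S)=1$ and $\chi(S)=0$, but say nothing about $\chi(S)\le -1$. Concretely: if $S$ is an incompressible pair of pants or once-punctured torus that is not $\partial$-incompressible, your process could terminate with $\Sigma$ a single $\partial$-parallel disk and $S$ recovered by attaching two bands, and no contradiction is produced---yet excluding precisely such surfaces is the entire content of the lemma. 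The auxiliary observation that ``banding a trivial disk onto a $\partial$-incompressible surface preserves $\partial$-incompressibility'' only treats one reassembly pattern; it does not cover a band joining two non-disk components of $\Sigma$, or a band from a non-disk component to itself, which is the generic way a surface of negative Euler characteristic is rebuilt. Relatedly, your argument never uses the hypothesis that $\partial N$ consists of \emph{tori} beyond noting that boundary circles are essential and stray disks are $\partial$-parallel; since the lemma fails for higher-genus incompressible boundary, any correct proof must use the torus structure more substantially.

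The missing idea is to track what a single essential $\partial$-compression does to $\partial S$ inside the boundary torus $T$. The circles of $\partial S\cap T$ are disjoint essential circles, hence parallel, and they cut $T$ into annuli; the arc $\beta=D\cap\partial M$ lies in one such annulus $A$. If $\beta$ had both endpoints on the same boundary circle of $A$ it would cut off a disk $D'\subset A$, and $D\cup D'$, pushed into the interior of $N$, would by incompressibility of $S$ and irreducibility of $N$ show that $\alpha$ cuts off a disk of $S$, contradicting essentiality. Hence $\beta$ spans $A$, and the $\partial$-compression along $D$ replaces the circle(s) of $\partial S$ meeting $\partial\beta$ by a circle that is null-homotopic in $T$. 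By incompressibility of the compressed surface $S'$ (which you did establish), that trivial circle bounds a disk in $S'$, so the component of $S'$ containing it is a disk; and, as you yourself observe, an essential $\partial$-compression creates a disk component only from an annulus. Thus $S$ is an annulus after one step---no iteration or Euler-characteristic bookkeeping is needed---and its $\partial$-parallelism then follows from irreducibility via the ball bounded by the resulting $\partial$-parallel disk. This one observation is where the torus hypothesis does its work, and it is essentially Hatcher's proof.
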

So, it remains to prove that $A$ is not $\partial$-parallel. Arguing by contradiction, suppose that $A$ is $\partial$-parallel. This implies that $\partial A\subset T$, and also, that there exists a $\partial$-compressing disk $D$ with $\alpha\subset \partial D$ for all arcs $\alpha$ properly embedded in $A$ and with endpoints in different components of $\partial A$ (see Figure \ref{figure2}). In particular, we can choose an arc $\alpha=\alpha_1$ as in Claim \ref{claim annulus}, that is, an arc $\alpha_1\subset S\cap f^n(S)$, with one endpoint in $\gamma\cap f^n(\gamma)$ and the other endpoint in the other component of $\partial A$. The rest of the boundary of $D$, $\partial D\setminus \alpha_1$ is contained in $T$. \par

\begin{figure}[b]
\begin{tikzpicture}

\draw[fill=gray,fill opacity=.5,ultra thick] (2.87,-.25) .. controls +(20:1cm) and +(-20:1cm) .. (2.87,-3.25) --  (2.87,-.25);
\draw (3,0) .. controls +(20:1cm) and +(-20:1cm) .. (3,-3);
\draw (1,0) .. controls +(10:-1cm) and +(-10:-1cm) .. (1,-3);
\draw (2,0) ellipse (1cm and .5cm);
\draw (2,-3) ellipse (1cm and .5cm);
\draw[dotted] (3.5,-0,4) ellipse (1.9cm and .7cm);
\draw[dotted] (3.5,-1,4) ellipse (1.5cm and .5cm);
\draw[dotted] (2,-.4) ellipse (1.5cm and .5cm);
\draw (1,0) -- (1,-3);
\draw (3,0) -- (3,-3);
\node at (1.5,-1.5) {$T$};
\node at (3.2,-1.5) {$D$};
\node at (-.5,-1.5) {$A$};
\node at (4.2,-.5) {$\alpha_1$};
\draw[->] (4,-.5) -- (3.4,-.5);
\end{tikzpicture}
\caption{\label{figure2} $\partial$-parallel annulus}
\end{figure}
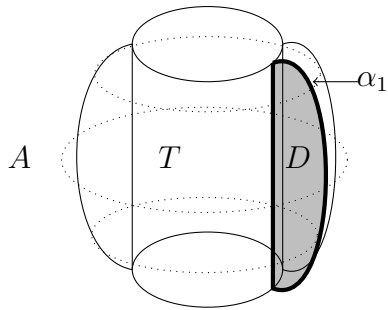

Now, Claim \ref{claim surface} and Lemma \ref{lema d incompresible o d paralelo} imply that both $S$ and $f^n(S)$ are $\partial$-incompressible. Hence, since $\alpha_1$ is properly embedded in $S$ and in $f^n(S)$, we have that $D$ is a $\partial$-compressing disk for $S$ and for $f^n(S)$.\par
$\partial$-incompressibility of $S$ and $f^n(S)$ implies that there are two disks $D_1\subset S$ and $D_2\subset f^n(S)$ such that $\partial D_1=\alpha_1\cup l_1$, where $l_1$ is a segment in $\gamma$, and  $\partial D_2=\alpha_1\cup l_2$, where $l_2$ is a segment in $f^n(\gamma)$. \par
The set $D'=D_1\cup D_2$ is an immersed disk, and $\partial D'=l_1\cup l_2$. Now, $l_1\subset\gamma$ and $l_2\subset f^n(\gamma)$. As we have said at the beginning of this proof, this implies that $\partial D'$ is essential in $T$, so $\partial D'$ cannot be a disk. This implies that $A$ is non-$\partial$-parallel.
and finishes Claim 3.\newline\par
In conclusion, we have seen that the existence of an Anosov tori in the boundary of a 3-manifold whose boundary consists of incompressible tori, implies that the 3-manifold is not acylindrical. This proves Proposition \ref{proposition no atoroidal}.

\section{Proof of Theorems \ref{theorem principal} and \ref{theorem principal con borde}}\label{s_teo1}

In this Section we finish Theorem \ref{theorem principal}, and prove Theorem \ref{theorem principal con borde}. Let us begin by finishing the necessity part of Theorem \ref{theorem principal}.\par
Recall that Proposition \ref{proposition reduccion} reduced the proof of the necessity part of Theorem \ref{theorem principal} to three cases: case (\ref{item toro en JSJ decomposition}): when the Anosov torus is part of the cutting tori of the JSJ-decomposition, case (\ref{item vertical torus}): when the Anosov torus is in the interior of a Seifert component of the JSJ-decomposition, and is not $\partial$-parallel, and (\ref{item horizontal torus}): when $M$ is a Seifert manifold and the Anosov torus is horizontal.\par
In Proposition \ref{proposition toro horizontal}, we show that in case (\ref{item horizontal torus}), then $M$ is either the 3-torus, or else the mapping torus of $-id$. This proves Theorem \ref{theorem principal} in this case. \par
In Proposition \ref{proposition vertical torus}, we prove that if $M$ is a compact connected irreducible Seifert manifold with or without boundary, admitting an Anosov torus $T$, then we have that either $M$ is as in case (\ref{item horizontal torus}), namely, $M$ is the 3-torus, or the mapping torus of $-id$; or else, $M$ is $T\times[0,1]$. But in this last case, $T$ is $\partial$-parallel. So, this proves Theorem \ref{theorem principal} in case (\ref{item vertical torus}). \par
The last case left is (\ref{item toro en JSJ decomposition}), when the Anosov torus is one of the cutting tori of a minimal JSJ-decomposition. Proposition \ref{proposition no atoroidal} shows that the Anosov torus cannot bound an atoroidal an acylindrical component of the JSJ-decomposition. Hence, the Anosov torus is a component of the boundary of a compact connected irreducible Seifert manifold. Proposition \ref{proposition vertical torus} shows that the Seifert component having the Anosov torus $T$ as part of its boundary is $T$ cross the interval. Hence the boundary of the Seifert component of the Anosov torus consists of two isotopic tori. Since the cutting tori of the JSJ-decomposition were taken to be a minimal family, this implies that the JSJ-decomposition consists of only one torus, the Anosov torus $T$. If we cut $M$ along $T$ we obtain $T$ cross the interval. Let $A=f|_T$ be the hyperbolic automorphism obtained since $T$ is an Anosov torus, and let $g:T\to T$ be a gluing diffeomorphism so that when we identify $x$ with $g(x)$ we reobtain $M$. Then $g$ commutes with $A$. This gives us three classes of diffeomorphisms: isotopic to $\pm id$, which would give us a Seifert $M$, or isotopic to a hyperbolic automorphism, which would give us that $M$ is the mapping torus of a hyperbolic automorphism. In the first situations we would have that there are no cutting tori in the JSJ-decomposition, so we are in the last situation, and this finishes the proof of the necessity part of Theorem \ref{theorem principal}.\par
The sufficiency part, as we have said is straightforward. If $M$ is the 3-torus, we can take $f=\left(
                                                                                                  \begin{array}{cc}
                                                                                                    2 & 1 \\
                                                                                                    1 & 1 \\
                                                                                                  \end{array}
                                                                                                \right)\times id$ on
${\mathbb T}^2\times {\mathbb S}^1$, this gives us infinitely many Anosov tori. If $M$ is the mapping torus of a hyperbolic automorphism $A$ over ${\mathbb T}^2$, there is a natural flow $f_t$ which is the suspension of $A$. The time-one map of this flow $f$ leaves invariant infinitely many tori, on which its dynamics is hyperbolic. Finally, let $M$ be the mapping torus of $-id$. Cut $M$ along an incompressible torus. We obtain a torus cross the interval. Define $g=\left(                                                                                                  \begin{array}{cc}
2 & 1 \\
1 & 1 \\
\end{array}
\right)\times id$ on this new manifold with boundary $\tilde M={\mathbb T}^2\times[0,1]$. To reobtain $M$ we identify ${\mathbb T}^2\times\{0\}$ with ${\mathbb T}^2\times\{1\}$ by means of the map $(x,0)\mapsto(-x,1)$. But this map commutes with $g$ on the boundary of $\tilde M$, hence $g$ extends to a diffeomorphism on $M$ leaving invariant infinitely many Anosov tori. This finishes the proof of Theorem \ref{theorem principal}.\newline\par
Finally, let us prove Theorem \ref{theorem principal con borde}. Let $M$ be a compact orientable irreducible 3-manifold with non-empty boundary consisting of incompressible tori, and admitting an Anosov torus $T$. Duplicate $M$ to obtain $M\sqcup M$ and glue along the corresponding boundary components. In this way we obtain a closed orientable irreducible 3-manifold $N$. Now, $N$ is in the hypotheses of Theorem \ref{theorem principal}, so if we cut $N$ along $T$ we obtain $T$ cross the interval. All the incompressible tori in the boundary of $M$ are hence isotopic to $T$, this implies that $M$ is in fact $T$ cross the interval. This proves Theorem \ref{theorem principal con borde}.

\end{document}